\title{On spaces of Euclidean triangles and triangulated Euclidean surfaces} 
\author{\.{I}smail Sa\u{g}lam, Ken'ichi Ohshika  and Athanase Papadopoulos}
\address{}
\email{}
\newtheorem{theorem}{Theorem}[section]
\newtheorem{corollary}[theorem]{Corollary}
\newtheorem{lemma}[theorem]{Lemma}
\newtheorem{proposition}[theorem]{Proposition}
\theoremstyle{definition}
\newtheorem{definition}[theorem]{Definition}
\theoremstyle{remark}
\newtheorem{remark}[theorem]{Remark}
\theoremstyle{note}
\theoremstyle{example}
\newtheorem{example}[theorem]{Example}
\newcommand{\R}{\mathbb R}
\numberwithin{equation}{section}
\numberwithin{table}{section}
\date{\today}
\begin{document}

\begin{abstract}

In this paper, we introduce an asymmetric distance function on the space of marked Euclidean triangles of normalised area, and we prove several properties of this metric, which turns out to be (a restriction of) a non-symmetric version of the classical Thompson distance.  We give a description of the geodesics of this metric, we show that it is Finsler, and we give a formula for its infinitesimal Finsler structure. We then introduce and study a Finsler metric of the space of singular Euclidean structures on a surface adapted to  an underlying fixed triangulation, and we also study its geodesics and its Finsler infinitesimal structure.
We then develop a theory of completeness and completion of asymmetric metrics which is adapted to our setting, and we use this theory in the study of the completeness of the metric we introduced on the space of triangles.  In doing so, we establish a bridgebetween one aspect of Thurston's theory of metrics on spaces of surfaces and Thompson's metrics.

\medskip 

\noindent {\bf Keywords.} Asymmetric metric, the space of Euclidean triangles, spaces of singular flat structures, spaces of convex polygons in the plane, geodesics, Finsler structure.
	 	
		\medskip
		
\noindent {\bf AMS codes.} primary 32G15, secondary  53C70;  51K05; 51K10;  53B40; 53C60

\end{abstract}

\maketitle

\tableofcontents

\section{Introduction}
In this paper, we provide the space of Euclidean triangles with normalised area with an asymmetric Finsler metric, which turns out to be a normalised asymmetric Thompson distance. We obtain several geometrical results concerning this metric, including a description of its geodesics and its infinitesimal structure.  After the study of this metric on spaces of triangles, we introduce and study a Finsler metric of a space of singular Euclidean structures on a surface adapted to an underlying fixed triangulation. Like in the case of Euclidean triangles, we study the geodesics and the Finsler infinitesimal structure of this new metric.
We then develop a theory of completeness and completion for asymmetric metrics and we use it in the study of the completeness of the metric we introduce on the space of triangles. We address several questions on symmetrisations and on one-parameter deformations of such metrics.
 
 Our interest in studying distance functions on spaces of metrics on surfaces arose from reading Thurston's paper \emph{Minimal stretch maps between hyperbolic surfaces} \cite{thurston} in which he defined an asymmetric metric on Teichm\"uller spaces of hyperbolic surfaces based on a theory of best maps between such surfaces. 
This metric is now called Thurston's metric.  To the interested reader, we refer to the two recent surveys on this metric \cite{Xu} and \cite{Xu}
 Analogues of Thurston's theory in the setting of Euclidean surfaces have already been studied in the papers \cite{2005c, MOP2, Saglam1, Saglam2}. The present paper, which was first intended to be a sequel to these papers, explores the analogy with the Thompson distance. 

The plan of the rest of this paper is the following.

We start, in \S \ref{s:Thompson}, by introducing the asymmetric Thompson distance function defined on pairs of points in a cone in
a finite-dimensional real vector space. This is a generalised distance function: it does not satisfy the symmetry axiom,  it does not separate points) and it may take negative values. It is a building block  for several metrics which we shall encounter in the 
rest of the paper. Symmetrising the asymmetric Thompson distance by taking the supremum of the forward and backward distances between pairs of points gives the known (symmetric) Thompson distance.

In \S \ref{s:space}, we introduce an asymmetric metric $\eta$ on the space $\frak{T}_1$ of unit area triangles in the Euclidean plane parametrised by the three different expressions of the form $(a_i+a_j-a_k)/{2}$, where $a_1, a_2,  a_3$ are the edge lengths of such a triangle. 
We then give a necessary and sufficient condition for a path in our space of triangles to be a geodesic for this metric. This will allow us to prove that any two points in $\frak{T}_1$  can be joined by a geodesic. In fact, we show that any two points can be joined by a bigeodesic, that is, a geodesic which remains geodesic when it is traversed in the reverse direction. We give a characterisation for a path in this space to be geodesic. We then show that the metric $\eta$ is Finsler, and give a formula for the norm induced on each tangent space by this Finsler structure.

In \S \ref{s:surface},  we consider the case of compact surfaces endowed with singular Euclidean metrics arising from a fixed triangulation. We equip the set of marked such surfaces with an asymmetric metric, defined in a way that generalises the case of the space of metrics on a single triangle which we studied in the two preceding sections. 
We show that for any two points in the new space, there is a bigeodesic joining them.   We then show that this metric is Finsler, and give an explicit formula for the associated Finsler structure defined on the tangent space at each point. We also consider a family of Finsler metrics which contains as special cases the (asymmetric) metric itself and its arithmetic symmetrisation. 

In \S \ref{s:weighted}, we address some naturally arising questions  concerning two families of weighted metrics which generalise the metrics we studied in the previous sections.

  In \S \ref{s:completion}, we study the questions of completeness and completion for an asymmetric metric space, notions with which we deal later in the paper, namely, in the study of Teichm\"uller spaces of triangles and of triangulated surfaces equipped with singular flat metrics. We introduce notions of forward and backward Cauchy sequences, and a property which we call convergence-symmetry property for asymmetric metric spaces. In a convergence-symmetric metric space, a sequence is forward Cauchy  if and only if it is backward Cauchy. This allows us to talk simply of Cauchy sequences and completeness in metric spaces having the convergence-symmetry property. We show that in such a space, completeness is equivalent to the condition that the arithmetic symmetrisation, or, equivalently, the max symmetrisation, of the asymmetric metric, is complete. 
 
 In \S \ref{s:topology}, we study the topology induced by the asymmetric metric which we defined on the space of singular Euclidean metrics on a compact surface with a fixed underlying triangulation introduced in \S \ref{s:surface}. This topology is induced by the arithmetic or the max symmetrisation, and it leads to a natural notion of convergence of sequences in terms of the asymmetric metric.   
 
 In \S \ref{s:completeness}, we  show that the space of Euclidean   triangles satisfies the convergence-symmetry property for sequences, and that this space is complete in the sense defined in \S\S \ref{s:completion} and  \ref{s:topology}. We then give an example of a family of metrics on a 
 compact surface equipped with singular Euclidean metrics
with a fixed underlying triangulation which is neither convergence-symmetric nor forward complete.

\bigskip

\noindent {\it Acknowledgements.} We are grateful to the referee of the first version of this paper for his or her very careful reading and insightful and helpful comments, in particular, for pointing out the relation between the metric we studied on the space of Euclidean triangles and an asymmetric Thompson distance.

\section{The asymmetric Thompson distance} \label{s:Thompson}


In this section, we introduce the asymmetric Thompson distance  defined on a cone in a finite-dimensional real vector space. 
A symmetrisation of this distance is the original Thompson distance introduced by Thompson in \cite{Thom}.
Other references on the Thompson  distance include the book \cite{LemmensNussbaum2012} by Lemmens and Nussbaum, whose main purpose is
to generalise  the classical Perron--Frobenius theory of nonnegative matrices to nonlinear mappings in finite-dimensional spaces, and which contains at the same time an introduction to the  Thompson distance. See also the papers  \cite{Lemmens-Roelands, Nussbaum1988, Nussbaum-Walsh2004}, on the  Thompson distance. We shall recall this distance.

Let $V$ be a finite-dimensional real vector space endowed with its standard topology.
A nonempty subset $C$ of $V$ is called a \emph{cone} if it is convex, if 
$\alpha C \subset C$ for all $\alpha \in \mathbb{R}_+^*$, and if
$\overline{C} \cap \overline{(-C)}=\{0\}$. An \emph{open cone} in $V$ is a cone which is also an open subset of $V$.
 
Let $C$ be an open cone in $V$. We define a partial order on $V$ by
$$v\leq_C u \ \text{if and only if}\ u-v\in \overline{C}$$
for all $u,v$ in $V$.

\begin{definition}[Asymmetric Thompson distance]
	For $u,v \in C$, define
	\[
	\eta_C(u,v)=\log \inf \{\lambda>0 : v\leq_C \lambda u\}.
	\]
	The function $\eta_C$ is called the {\it asymmetric Thompson distance} on $C$.
\end{definition}

\noindent Observe that $\eta_C$ satisfies the triangle inequality, that it is not symmetric and that it may take negative values. Moreover, for any $\lambda,\mu >0$, we have
$$\eta_C(\lambda u,\mu v)= \eta_C(u,v)+\log \mu-\log \lambda.$$

We need to consider geodesics even for metric spaces whose distance functions may take negative values (but equipped with topologies).
Here is the definition of a \lq\lq generalised'' geodesic.

\begin{definition}[Geodesic]
Let $(X,d)$ be an asymmetric metric space in which $d$ is allowed to take negative values.
We assume that $X$ has a topology with respect which $d$ is continuous.
Then a continuous path $\alpha \colon [a,b] \to X$ is said to be a geodesic if for every
     	\(u,v,w\in [a,b]\) with \(u\leq v\leq w\), we have
     	\[
     	d(\alpha(u),\alpha(w))
     	=
     	d(\alpha(u),\alpha(v))
     	+
     	d(\alpha(v),\alpha(w)).
     	\]
Moreover $\alpha$ is said to be bigeodesic if both $t\mapsto \alpha(t)$ and $t\mapsto \alpha(b-t)$ are geodesics.
\end{definition}
\begin{remark}
\begin{enumerate}
\item 
This definition is the same as the ordinary one if we assume that $d$ takes only non-negative values.
  
  \item
  In this definition,  $\alpha$ is said to be a geodesic between $\alpha(a)$ and $\alpha(b)$. The path $\alpha$ is not necessarily a geodesic between $\alpha(b)$ and $\alpha(a)$.
	Being a bigeodesic means that $\alpha$ is a geodesic between $\alpha(a)$ and $\alpha(b)$, and between $\alpha(b)$ and $\alpha(a)$, at the same time.
     	
 \item With our definition, the fact of being a geodesic is independent of parametrisation.

\item In the following sections, we use symbols such as $\alpha$ and $\alpha'$ to denote a pair of paths in some metric space. In other words, the notation $\alpha'$ does not mean the derivative of $\alpha$.   For the derivative of a path $\alpha$, we shall use the symbol $\dot{\alpha}$. Whenever we use the derivative of a path, it will be understood that this path is piecewise $C^1$.
\end{enumerate}
     \end{remark}

There are several ways of obtaining a symmetric metric on $C$ out of $\eta_C$; one of them is to take the maximum of the two Thompson distances between the two points. It  gives the following:
\begin{definition}[Thompson distance]
The \emph{Thompson distance} $\tau$ is defined by
\[
\tau(u,v)=\max\{\eta_C(u,v),\eta_C(v,u)\}
\]
for any $u, v \in C$.
\end{definition}

\begin{remark}
One may take another symmetrisation of the asymmetric Thompson distance, namely, 
\[
h(u,v)=\frac{1}{2}\bigl(\eta_C(u,v)+\eta_C(v,u)\bigr).
\]
This fonction $h$ 
 satisfies  \[
h(\lambda u,\mu v)=h(u,v)
\]
for all $\lambda,\mu>0$. Thus,  it induces a symmetric metric on the
projectivised cone
\[
\mathbb{P}C=C/\mathbb{R}_+^*.
\]

 The function induced by $h$ on the projectivised cone
 $\mathbb{P}C$ 
 is the well-known \emph{Hilbert metric}. 
 References for this metric include the papers \cite{Gaubert-Gunawardena, Nussbaum1988, Nussbaum1994, Nussbaum-Walsh2004} and the book \cite{Papadopoulos-Troyanov}.
 In the same vein, the asymmetric Thompson distance coincides with the so-called Funk metric after projectivisation; see \cite{Papadopoulos-Troyanov} as well for surveys on the Funk metric.
 \end{remark}

The next lemma says that the definition of the asymmetric Thompson distance is compatible with restriction to vector subspaces.

\begin{lemma}[Restriction to a subspace]
	\label{lemma:subspace}
	Let $W$ be a vector subspace of $V$, and let $C'=W\cap C$. Then, for every
	$u,v\in C'$, we have
	\[
	\eta_{C'}(u,v)=\eta_C(u,v).
	\]
\end{lemma}

\begin{proof}
	Since $u,v$ are in $W$, for any $\lambda >0$, we have $\lambda u-v\in W$. Hence
	\[
	v\leq_{C'} \lambda u
	\]
	if and only if
	\[
	\lambda u-v\in C'=W\cap C,
	\]
	which is equivalent to $\lambda u-v\in C$, that is,
	\[
	u\leq_C \lambda v.
	\]
	Therefore the sets over which the two infima are taken are the same:
	\[
	\{\lambda>0 : v\leq_{C'} \lambda u\}
	=
	\{\lambda>0 : v\leq_C \lambda u\}.
	\]
	Taking logarithms gives
	\[
	\eta_{C'}(u,v)=\eta_C(u,v).
	\]
\end{proof}

Let $K$ be $\overline C$. The dual cone of \(K\) is defined to be
\[
K^*=\{\varphi\in V^*:\ \varphi(x)\geq 0 \text{ for all } x\in K\}.
\]
For \(x,y\in C\), define
\[
M(y/x)=\inf\{\lambda>0:\ y\leq_C \lambda x\}.
\]
We shall use the standard dual characterisation
\[
M(y/x)
=
\sup_{\varphi\in K^*\setminus\{0\}}
\frac{\varphi(y)}{\varphi(x)}.
\]
Since $x$ lies in $C=\operatorname{int}K$, every non-zero
\(\varphi\in K^*\) satisfies \(\varphi(x)>0\). Hence the above quotient  is
well defined. Thus we have
\begin{equation}
\label{dual}
\eta_C(x,y)
=
\log M(y/x)
=
\log
\sup_{\varphi\in K^*\setminus\{0\}}
\frac{\varphi(y)}{\varphi(x)}.
\end{equation}


\begin{lemma}[Geodesics for the asymmetric Thompson distance]
	\label{lemma:standard}
	For \(u,v\in C\), set
	\[
	\gamma(t)=(1-t)u+tv,\qquad 0\leq t\leq 1.
	\]
	Then \(\gamma\) is a bigeodesic for the asymmetric Thompson distance
	\(\eta_C\).
\end{lemma}

\begin{proof}
	Since \(C\) is convex, we have \(\gamma(t)\in C\) for all \(t\in[0,1]\).
	Let
	\[
	\beta=M(v/u).
	\]
	Then \(0<\beta<\infty\). In particular, for every \(a\in[0,1]\), we have
	\[
	1-a+a\beta>0.
	\]
	
	We claim that, for every \(0\leq a\leq b\leq 1\),
	\[
	M(\gamma(b)/\gamma(a))
	=
	\frac{1-b+b\beta}{1-a+a\beta}.
	\]
	This can be shown as follows.
	For \(\varphi\in K^*\setminus\{0\}\), we set
	$\displaystyle
	r_\varphi=\frac{\varphi(v)}{\varphi(u)}.$
	Then \(r_\varphi>0\), and 
	\[
	\frac{\varphi(\gamma(b))}{\varphi(\gamma(a))}
	=
	\frac{(1-b)\varphi(u)+b\varphi(v)}
	{(1-a)\varphi(u)+a\varphi(v)}
	=
	\frac{1-b+br_\varphi}{1-a+ar_\varphi}.
	\]
	For fixed \(0\leq a\leq b\leq1\), define the function
	\[
	F(r)=\frac{1-b+br}{1-a+ar}
	\]
	for $r >0$.
	Then
	\[
	F'(r)=\frac{b-a}{(1-a+ar)^2}\geq0.
	\]
	Thus \(F\) is increasing on \((0,\infty)\).  Since
	$\displaystyle
	\beta=\sup_{\varphi\in K^*\setminus\{0\}}r_\varphi
	$,
	and since \(F\) is increasing and continuous on \((0,\infty)\), we have
	\[
	\sup_{\varphi\in K^*\setminus\{0\}}F(r_\varphi)
	=
	F(\beta).
	\]
	Hence
	\[
	M(\gamma(b)/\gamma(a))
	=
	\frac{1-b+b\beta}{1-a+a\beta}.
	\]
	This proves the claim.
	
	It follows from the claim that
	\[
	\eta_C(\gamma(a),\gamma(b))
	=
	\log M(\gamma(b)/\gamma(a))
	=
	\log(1-b+b\beta)-\log(1-a+a\beta).
	\]
	Therefore, for \(0\leq s\leq t\leq r\leq1\), we have
	\[
	\eta_C(\gamma(s),\gamma(r))
	=
	\log(1-r+r\beta)-\log(1-s+s\beta),
	\]
	whereas
	$$
	\eta_C(\gamma(s),\gamma(t))
	+
	\eta_C(\gamma(t),\gamma(r))
	=
	\bigl[\log(1-t+t\beta)-\log(1-s+s\beta)\bigr]
	+
	\bigl[\log(1-r+r\beta)-\log(1-t+t\beta)\bigr].
	$$
	These two expressions are equal. Hence
	\[
	\eta_C(\gamma(s),\gamma(r))
	=
	\eta_C(\gamma(s),\gamma(t))
	+
	\eta_C(\gamma(t),\gamma(r)).
	\]
	Thus \(\gamma\) is a geodesic for \(\eta_C\).
	Exchanging the roles of $u$ and $v$, we see that $\gamma$ is a bigeodesic.
\end{proof}

\begin{remark}
	\label{remark:crucial}
Suppose that $u$ and $v$ are linearly independent.
	Let $\lambda\colon [0,1]\to \mathbb{R}_+^*$ be a continuous function such that
	$\lambda(0)=\lambda(1)=1$ and let $\gamma$ be as in Lemma \ref{lemma:standard}. Then the path
	\[
	\tilde\gamma(t)=\lambda(t)\gamma(t)
	\]
	is also a geodesic for $\eta_C$. Moreover, $\tilde\gamma$ is a bigeodesic; that is,
	the same path traversed in the reverse sense, $t\mapsto \tilde\gamma(1-t)$ is also a geodesic.
\end{remark}

\subsection{An example: the standard positive cone}

Consider the standard positive cone $C=(\mathbb{R}_+^*)^n$. If
$u=(u_i)$ and $v=(v_i)$ are elements of $C$, then the asymmetric Thompson distance defined in the preceding subsection
 is given by
\[
\eta_C(u,v)=\log \max_{1\leq i\leq n}\frac{v_i}{u_i}.
\]

We now describe the geodesics in $C$.

\begin{proposition}
	\label{main-geodesic}
	A continuous path $\alpha\colon [0,1]\to C$ expressed in coordinates as
	$\alpha(t)=(u_i(t))_{1\leq i\leq n}$ is a geodesic for $\eta_C$ if and only if there exists
	$j\in \{1,\dots,n\}$ such that, for every $t'>t$,
	\[
	\frac{u_i(t')}{u_i(t)}
	\leq
	\frac{u_j(t')}{u_j(t)}
	\qquad \text{for all } i\in \{1,\dots,n\}.
	\]
	Moreover, if $\alpha$ is a geodesic for $\eta_C$, then,
	 for every continuous function
	$\lambda\colon [0,1]\to \mathbb{R}_+^*$, the path
	\[
	\widetilde{\alpha}(t)=\lambda(t)\alpha(t)
	\]
	is also a geodesic for $\eta_C$.
\end{proposition}

\begin{proof}
	Consider the following (asymmetric) Minkowski distance on $\mathbb{R}^n$:
	\[
	\delta(x,y)=\max_{1\leq i\leq n}(y_i-x_i)=\Phi(y-x),
	\]
	where $x=(x_i)$, $y=(y_i)$, and
	\[
	\Phi(z)=\max_{1\leq i\leq n} z_i.
	\]
	The map
	\[
	\operatorname{Log}\colon (\mathbb{R}_+^*)^n\to \mathbb{R}^n,
	\qquad
	(u_i)\mapsto (\log u_i),
	\]
	is an isometry from $(\mathbb{R}_+^*)^n$, equipped with the asymmetric
	Thompson distance $\eta_C$, to $\mathbb{R}^n$, equipped with the  Minkowski distance $\delta$.
	
	We shall say that a continuous path $\beta\colon [0,1]\to \mathbb{R}^n$ is a
	$\delta$-geodesic if
	\[
	\delta(\beta(s),\beta(r))
	=
	\delta(\beta(s),\beta(t))
	+
	\delta(\beta(t),\beta(r))
	\]
	for all $0\leq s\leq t\leq r\leq 1$.
	
	It is easy  to see that a path $\beta(t)=(x_i(t))$ is a
	$\delta$-geodesic if and only if there exists
	$j\in \{1,\dots,n\}$ such that, for every $t'>t$,
	\[
	x_i(t')-x_i(t)
	\leq
	x_j(t')-x_j(t)
	\qquad \text{for all } i.
	\]
	Indeed, this condition says precisely that the same coordinate realises the
	maximum in the expression
	\[
	\delta(\beta(t),\beta(t'))
	=
	\max_i\bigl(x_i(t')-x_i(t)\bigr)
	\]
	for all $t'>t$.
	
Now let $\alpha(t)=(u_i(t))$ and write
	\[
	\operatorname{Log}\alpha(t)=(x_i(t)),
	\qquad
	x_i(t)=\log u_i(t).
	\]
	Since $\operatorname{Log}$ is an isometry, $\alpha$ is a geodesic for
	$\eta_C$ if and only if $\operatorname{Log}\circ \alpha$ is a
	$\delta$-geodesic. Hence $\alpha$ is a geodesic for $\eta_C$ if and only if
	there exists $j\in \{1,\dots,n\}$ such that, for every $t'>t$,
	\[
	\log u_i(t')-\log u_i(t)
	\leq
	\log u_j(t')-\log u_j(t)
	\qquad \text{for all } i.
	\]
	Equivalently,
	\[
	\frac{u_i(t')}{u_i(t)}
	\leq
	\frac{u_j(t')}{u_j(t)}
	\qquad \text{for all } i.
	\]
	
	Finally, if $\beta$ is a $\delta$-geodesic and $b\colon [0,1]\to\mathbb{R}$
	is continuous, then
	\[
	\widetilde{\beta}(t)=\beta(t)+b(t)(1,\dots,1)
	\]
	is again a $\delta$-geodesic, since
	\[
	\delta(\widetilde{\beta}(s),\widetilde{\beta}(t))
	=
	\delta(\beta(s),\beta(t))+b(t)-b(s).
	\]
	Taking $b(t)=\log\lambda(t)$ gives
	\[
	\operatorname{Log}(\lambda(t)\alpha(t))
	=
	\operatorname{Log}\alpha(t)+\log\lambda(t)(1,\dots,1).
	\]
	Therefore $\widetilde{\alpha}(t)=\lambda(t)\alpha(t)$ is also a geodesic for
	$\eta_C$.
\end{proof}

\begin{remark} Geodesics between two points in $C$ are not necessarily unique. 
\end{remark}

\section{The space of Euclidean triangles} \label{s:space}
\begin{definition}[Metric space]
	\label{metric}
	A  metric space is a non-empty set $X$ together with a function 
	$$d: X\times X\to [0,\infty),$$ called a metric,
	satisfying the following two properties:
	
	\begin{enumerate}
		\item 
		$d(x,y)=0$ if and only if $x=y$,
		\item 
		$d(x,y)+d(y,z)\geq d(x,z)$ for all $x,y,z$ in $X$.
	\end{enumerate}
\end{definition}

Thus, a metric, in the sense we are using it in the present paper, does not necessarily satisfy the symmetry axiom, that is, $d(x,y)$ may be different from $d(y,x)$. We shall say that the metric $d$ is \emph{asymmetric} if there exist two points $x$ and $y$ in $X$ such that $d(x,y)\not=d(y,x)$.

We emphasise the fact that given a metric in this sense, there is more than one associated interesting symmetric metric, depending on the way we symmetrise it; this will appear in the following.

In this section, we equip the space of marked Euclidean triangles with a metric. By a \emph{marked} triangle, we mean a triangle with a specification of  the three vertices, in such a way that when we talk about a homeomorphism between two marked triangles, it is understood which vertex of the first one goes to which vertex of the second one. Since all our triangles will be marked, we shall not specify this adjective anymore.

Let \((X,d)\) be a metric space and suppose that it is asymmetric. We associate with \(d\) the following two symmetric metrics on \(X\):
\[
d_{\mathrm{arith}}(x,y)=\frac{1}{2}\bigl(d(x,y)+d(y,x)\bigr)
\]
and
\[
d_{\mathrm{max}}(x,y)=\max\{d(x,y),d(y,x)\}.
\]
These two metrics induce the same topology on \(X\), namely, the one with basis the collection of open balls of the form $B(x,\epsilon)=\{y\in X \ \vert \ d^*(x,y)<\epsilon\}$ where $x$ varies in $X$, $\epsilon$ varies over the set of positive reals, and where $d^*$ could be either $d_{\mathrm{arith}}$ or $d_{\mathrm{max}}$.

 Throughout this paper, whenever \(X\) is regarded as a topological space, it will be endowed with this topology.

\subsection{Distance between triangles}

We parametrise the set of Euclidean triangles by the following subset of $\R^3$:
$$\{(a_1,a_2,a_3) : a_1,a_2,a_3>0, \ a_2+a_3-a_1>0, a_1+a_3-a_2>0, a_1+a_2-a_3>0\}$$
where the positive numbers $a_1, a_2, a_3$ represent the lengths of the  edges of a triangle.

This set can be identified with the product space $(\R^*_+)^3=\{(A_1,A_2,A_3): A_1, A_2, A_3 >0\}$ via the mapping
$$A_1=\frac{a_2+a_3-a_1}{2}$$
$$A_2=\frac{a_3+a_1-a_2}{2}$$
$$A_3=\frac{a_1+a_2-a_3}{2}.$$

The area of a triangle $(a_1,a_2,a_3)$ in terms of $(A_1,A_2,A_3)$ is given by Heron's formula:
\begin{equation}\label{f:Heron}
\mathrm{Area}(a_1,a_2,a_3)=\mathrm{Ar}(A_1,A_2,A_3)=\sqrt{(A_1+A_2+A_3)A_1A_2A_3}.
\end{equation}
We note that the parameters $A_1, A_2, A_3$ are used in the study of circle packings, see e.g.  \cite{Colin}.

We define a function $$\eta: (\R^*_+)^3\times (\R^*_+)^3\to \R$$ by setting
$$\eta((A_1,A_2,A_3),(A_1',A_2',A_3'))=\eta_C((A_1,A_2,A_3),(A_1',A_2',A_3'))=\log \max \{A_1'/A_1 ,A'_2/A_2, A_3'/A_3\},$$
\noindent where $C= (\R_+^*)^3$ is the standard positive cone  $\R^3$, and $\eta_C$ is the asymmetric Thompson distance defined in the preceding section.

The natural action of $\R^*_+$ on $(\R^*_+)^3$ by $\lambda(A_1,A_2,A_3)=(\lambda A_1,\lambda A_2, \lambda A_3)$ corresponds to scaling a triangle by the factor $\lambda$.

For $X$ and $Y$ in $(\R^*_+)^3$ and for $\lambda$ and $\lambda'$ in $\mathbb{R}_+^*$, we have
$$\exp(\eta(\lambda X,\lambda'Y))=\frac{\lambda'}{\lambda}\exp(\eta(X,Y)).$$

Consider the  subspace $\frak{T}_1\subset (\R^*_+)^3$ consisting of unit area triangles:

$$\frak{T}_1=\{(A,B,C): \mathrm{Ar}(A,B,C)=1\}.$$

\begin{proposition}
The function $\eta$ is an asymmetric metric on $\frak{T}_1$. In fact, the metric $\eta$ is the restriction of the Thompson distance  defined on the cone $(\R^*_+)^3$ to the subspace consisting of unit-area triangles.
 \end{proposition}
\begin{proof}
The triangle inequality is obviously satisfied. Assume that $(A,B,C)$ and $(A',B',C')$ are in $\frak{T}_1$ and that $ \eta((A,B,C),(A',B',C'))\leq0$. This means that $A'\leq A$, $B'\leq B$ and $C'\leq C $. Assume, without loss of generality, that $A'<A$.  Then, from Heron's formula recalled in (\ref{f:Heron}), we have 
$$\mathrm{Ar}(A',B',C')<\mathrm{Ar}(A,B,C),$$
which is a contradiction. Therefore $A=A'$. Thus, $\eta((A,B,C), (A',B',C'))\leq 0$ implies $A=A'$, $B=B'$ and $C=C'$. Asymmetry may be checked by looking e.g.\ at the distances between the points $(1,1,1)$ and $(\sqrt{3}/2, \sqrt{3}/2, 1-\sqrt{3}/2)$.
\end{proof}

%
%
     
%
%
%
%

\begin{theorem}[Characterisation of geodesics in $ \frak{T}_1$]
\label{geodesic}
Let $\alpha: [0,1]\to \frak{T}_1$ be a piecewise $C^1$ path, $\alpha(t)=(A_1(t),A_2(t),A_3(t))$.
Then $\alpha$ is a geodesic in $\frak{T}_1$ (with respect to the metric $\eta$) if and only if there exists some $j \in \{1,2,3\}$ such that for each $t, t' \in [0,1]$ with $t\leq t'$  we have
$$\frac{A_i(t')}{A_i(t)}\leq  \frac{A_j(t')}{A_j(t)}\ \text{for all}\ i\in \{1,2,3\}.$$

\end{theorem}
\begin{proof}
This follows from Proposition \ref{main-geodesic}. Indeed a path in $\frak{T}_1$ is a geodesic if and only if it is a geodesic with respect to Thompson's asymmetric distance $\eta_C$, where $C=(\R^*_+)^3$.
\end{proof}

\begin{corollary}
	\label{corrollary-log-derivative}
Let $\alpha: [0,1]\to \frak{T}_1$ be a $C^1$ path with $\alpha(t)=(A_1(t), A_2(t), A_3(t))$. Suppose that  there exists $j \in \{1,2,3\}$ such that for each $i\in \{1,2,3\}$ and  for each $s \in [0,1]$, we have
$$\frac{\dot{A}_i(s)}{A_i(s)}\leq \frac{\dot{A}_j(s)}{A_j(s)}.$$
Then $\alpha(t)$ is a geodesic in $\frak{T}_1$.
\end{corollary}
\begin{proof}
Under the hypothesis, it is clear that 
$$\log A_i(t')-\log A_i(t)\leq \log A_j(t')-\log A_j(t)\ \text{for all}\ i\in \{1,2,3\} \ \text{and for all}\ t\leq t'.$$
\end{proof}

\begin{corollary}
	\label{cor:bigeodesic}
	For any two distinct points in $\mathfrak{T}_1$, there exists a bigeodesic
	joining them.
\end{corollary}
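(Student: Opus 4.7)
The plan is to build an explicit bigeodesic between any two distinct points $X=(A_1,A_2,A_3)$ and $Y=(A_1',A_2',A_3')$ of $\frak{T}_1$ by interpolating linearly in logarithmic coordinates and then normalising the area. Concretely, I would set
\[
\alpha(t)=\bigl(A_1^{1-t}(A_1')^{t},\ A_2^{1-t}(A_2')^{t},\ A_3^{1-t}(A_3')^{t}\bigr),\qquad t\in[0,1],
\]
and let $\lambda(t)>0$ be the unique scalar with $\lambda(t)\alpha(t)\in\frak{T}_1$. Writing $A_i(t)=A_i^{1-t}(A_i')^{t}$, one gets the key identity
\[
\frac{A_i(t')}{A_i(t)}=\left(\frac{A_i'}{A_i}\right)^{t'-t},
\]
valid for all $i$ and all $0\le t\le t'\le1$. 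Since $X\in\frak{T}_1$ and $Y\in\frak{T}_1$, we have $\lambda(0)=\lambda(1)=1$, so the rescaled path $\alpha'(t)=\lambda(t)\alpha(t)$ joins $X$ to $Y$ inside $\frak{T}_1$.

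The next step is to apply Theorem \ref{geodesic} in both directions. For the forward direction, I would choose $j\in\{1,2,3\}$ so that the ratio $A_j'/A_j$ is maximal among the three ratios $A_i'/A_i$. Then for every $i$ and every $t\le t'$,
\[
\log A_i(t')-\log A_i(t)=(t'-t)\log(A_i'/A_i)\le (t'-t)\log(A_j'/A_j)=\log A_j(t')-\log A_j(t),
\]
which is exactly the criterion of Theorem \ref{geodesic}; hence $\alpha'$ is a geodesic from $X$ to $Y$. For the reverse direction, I would set $\beta(s)=\alpha(1-s)$, so that
\[
\frac{B_i(s')}{B_i(s)}=\left(\frac{A_i}{A_i'}\right)^{s'-s}\quad\text{for }s\le s',
\]
and choose $k\in\{1,2,3\}$ so that $A_k/A_k'$ is maximal, that is, $k$ minimises $A_i'/A_i$. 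The same computation as above, with $A_i/A_i'$ in place of $A_i'/A_i$, shows that the criterion of Theorem \ref{geodesic} is again satisfied; hence the reverse path $\lambda(1-s)\beta(s)$ is also a geodesic in $\frak{T}_1$. By the definition of a bigeodesic, this proves the corollary.

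There is no real obstacle in the argument; the essential point is that the log-linear interpolation makes the ratios $A_i(t')/A_i(t)$ depend on $i$ only through the fixed number $\log(A_i'/A_i)$, so a single coordinate $j$ dominates uniformly in $t,t'$ on the forward leg, and likewise a single (possibly different) coordinate $k$ dominates uniformly on the reverse leg. The rescaling by $\lambda(t)$ is harmless because, as already exploited in the proof of Theorem \ref{geodesic}, it only adds a term $\log\lambda(t')-\log\lambda(t)$ to $\eta(\alpha'(t),\alpha'(t'))$ that telescopes along any subdivision $t_1\le t_2\le t_3$.
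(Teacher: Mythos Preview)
Your proof is correct and follows exactly the same route as the paper: the paper also defines $\alpha(t)=(A^{1-t}A'^t,B^{1-t}B'^t,C^{1-t}C'^t)$, rescales by $\lambda(t)$, and invokes Theorem \ref{geodesic}. You have simply spelled out in full the verification that the paper leaves to the reader with the phrase ``It is clear that reverse of $\alpha'$ is a geodesic as well.''
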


\begin{proof}
	Let $(A,B,C)$ and $(A',B',C')$ be two distinct points in $\mathfrak{T}_1$.
	Define a path $\alpha\colon [0,1]\to (\mathbb{R}_+^*)^3$ by
	\[
	\alpha(t)=\bigl(A^{1-t}(A')^t,\; B^{1-t}(B')^t,\; C^{1-t}(C')^t\bigr).
	\]

	\noindent 	For each $t\in [0,1]$, let $\lambda(t)>0$ be the unique real number such that the path
	\[
	\widetilde{\alpha}(t)=\lambda(t)\alpha(t)
	\]
	belongs to $\mathfrak{T}_1$. Corollary \ref{corrollary-log-derivative} implies that $\widetilde{\alpha}$ is a geodesic.
	
	The same argument applied to the reverse path shows that
	$t\mapsto \widetilde{\alpha}(1-t)$ is also a geodesic. Hence
	$\widetilde{\alpha}$ is a bigeodesic.
\end{proof}

\subsection{The Finsler structure of $\frak{T}_1$}
\label{finsler}
In this subsection, we show that the metric $\eta$ on $\frak{T}_1$ is Finsler, and give a formula for its infinitesimal Finsler norm. Before that, we need to show that $\frak{T}_1$ is a differentiable submanifold of $(\R_+^*)^3$. We recall that
$$\frak{T}_1=\{(A_1,A_2,A_3) : A_i>0, (A_1+A_2+A_3)A_1A_2A_3=1\}.$$ 
 Let $q:(\R^*_+)^3\to \R^*_+$ be the map defined by $$(A,B,C)\mapsto (A+B+C)ABC.$$ 
	Clearly, this map is differentiable.
	The differential  is everywhere non-zero (see the proof of Lemma \ref{non-zero}), and in particular $1$ is a regular value. Therefore $\frak{T}_1$ is an embedded differentiable submanifold of $(\R^*_+)^3$.

We recall the definition of a Finsler structure, in this setting of non-necessarily symmetric metric spaces. We start with the definition of a Minkowski norm. 

\begin{definition}[Minkowski norm]
\label{Minkowski norm}
	Let $V$ be a real vector space. A \emph{Minkowski norm} 
	on $V$ is a function $V\to [0,\infty)$, $v \mapsto \lvert\lvert v \rvert \rvert$ such that
	 the following properties hold for every $v$ and $w$ in $V$:
	\begin{enumerate}
		\item 
		$\lvert\lvert v\rvert\rvert=0$ if and only if $v$ is the zero vector in $V$;
		\item
		$\lvert\lvert t v\rvert\rvert= t \lvert\lvert v \rvert \rvert $ for every $t>0$;
		\item
		$\lvert\lvert tv+(1-t)w\rvert\rvert\leq t\lvert\lvert v \rvert\rvert + (1-t)\lvert\lvert w \rvert\rvert$ for every $t\in [0,1]$.
	\end{enumerate}
\end{definition}

%
%

The epithet \emph{Minkowski} is used in Definition \ref{Minkowski norm} in order to distinguish this notion from that of \emph{norm} in which Condition (2) is replaced by the following stronger condition: 
$\lvert\lvert t v\rvert\rvert= t \lvert\lvert v \rvert \rvert$ for every $t\in \mathbb{R}$.

The term Minkowski norm, with the same definition, is used in Finsler geometry, see e.g. \cite{MT}.
Thus, it is understood that our norms may not satisfy the symmetry axiom.

Let $M$ be a differentiable manifold and  $TM$ its tangent bundle. 

\begin{definition}[Finsler structure]
	A Finsler structure on $M$ is a function $F: TM \to [0,\infty)$ such that
	
	\begin{enumerate}
		\item 
		$F$ is continuous;
		\item
		for each $x\in M$, $F\lvert_{T_xM}$ is a Minkowski norm.
	\end{enumerate}
\end{definition}

Let $F$ be a Finsler structure on a manifold $M$. For each piecewise $C^1$ curve $c: [a,b]\to M$, its length  is defined by  
$$l(c)=l_F(c)=\int_{a}^{b}F(\dot{c}(t))dt.$$

The following notion of Finsler metric is a classical one, see e.g. \cite{MT}. 
\begin{definition}[Finsler metric]
	A metric $d$ on a differentiable manifold $M$ is said to be \emph{Finsler} if it is the length metric associated with a Finsler structure, that is, if there exists a Finsler structure $F$
	on $M$ such that for every $x,y \in M$ we have
	$$d(x,y)= \inf\{l_F(c)\}$$
	 where $c$ ranges over all piecewise $C^1$ curves such that $c(0)=x$ and $c(1)=y$. 
\end{definition}

Now we start the proof of the fact that the metric $\eta$ on $\frak{T}_1$ is Finsler. 
We shall give the precise expression of the associated family of norms on tangent spaces.  

 We consider the following function $F$ on the tangent bundle $T(\frak{T}_1)$ of $\frak{T}_1$:
$$F: (A_1,A_2, A_3,v_1, v_2,v_3)\mapsto \max_{i}\{\frac{v_i}{A_i}\}.$$
Here, $A=(A_1,A_2,A_3)$ is a point in
$\frak{T}_1$ and $(v_1, v_2, v_3)$ are the coordinates of a tangent vector at $A=(A_1,A_2, A_3)$. 
We need to prove that $F\lvert_{T_A\frak{T}_1}$ is non-negative and satisfies Condition (1) of Definition \ref{Minkowski norm}. 
The other conditions of a Minkowski norm are clearly satisfied. 

\sloppy
\begin{lemma}
\label{non-zero}
The function $F\lvert_{T_A\frak{T}_1}$ takes only non-negative values. Moreover, $F\lvert_{T_A\frak{T}_1}(A_1, A_2, A_3, v_1, v_2, v_3)=0$ if and only if $(v_1, v_2, v_3)=(0,0,0)$.
\end{lemma}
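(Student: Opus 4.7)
The plan is to exploit the constraint that a tangent vector at $A=(A_1,A_2,A_3)\in\frak{T}_1$ must satisfy, coming from the defining equation $q(A_1,A_2,A_3)=(A_1+A_2+A_3)A_1A_2A_3=1$. Since $1$ is a regular value of $q$, the tangent space $T_A\frak{T}_1$ is exactly the kernel of $dq_A$. So first I would compute the partial derivatives
\[
\frac{\partial q}{\partial A_i}=A_1A_2A_3\cdot\frac{A_1+A_2+A_3+A_i}{A_i}\quad(i=1,2,3),
\]
which, setting $\sigma=A_1+A_2+A_3$, yields the defining relation for $(v_1,v_2,v_3)\in T_A\frak{T}_1$:
\[
\sum_{i=1}^{3}\frac{\sigma+A_i}{A_i}\,v_i=0.
\]

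Next, I would introduce the logarithmic ratios $u_i:=v_i/A_i$, so that $F\bigl|_{T_A\frak{T}_1}(A_1,A_2,A_3,v_1,v_2,v_3)=\max_i u_i$. Rewriting the constraint above gives
\[
(\sigma+A_1)u_1+(\sigma+A_2)u_2+(\sigma+A_3)u_3=0,
\]
whose coefficients $\sigma+A_i$ are all strictly positive, since each $A_i>0$.

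From this identity the conclusion follows at once. If all three $u_i$ were strictly non-positive, then the left-hand side would be a non-positive combination of non-positive numbers with strictly positive coefficients; being forced to equal zero, each $u_i$ would have to vanish. Contrapositively, if $(v_1,v_2,v_3)\neq(0,0,0)$ then at least one $u_i>0$, hence $F\bigl|_{T_A\frak{T}_1}>0$. Similarly, if $F\bigl|_{T_A\frak{T}_1}=0$ then $u_i\leq 0$ for every $i$, and the same positivity of coefficients in the constraint forces $u_1=u_2=u_3=0$, i.e.\ $(v_1,v_2,v_3)=(0,0,0)$. This gives both assertions of the lemma.

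The only delicate step is correctly deriving the constraint on tangent vectors; once the positivity of the coefficients $\sigma+A_i$ is in hand, the rest is a one-line sign argument. A mild pitfall to watch for is making sure that the $(v_1,v_2,v_3)$ in the statement of the lemma is really an element of $T_A\frak{T}_1$ rather than of $T_A(\mathbb{R}^*_+)^3$; on the ambient space $F$ can certainly be negative, so the constraint is essential.
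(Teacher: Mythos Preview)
Your proof is correct and follows essentially the same approach as the paper: both differentiate the defining relation $A_1A_2A_3(A_1+A_2+A_3)=1$ to obtain a linear constraint on tangent vectors with strictly positive coefficients, and then use a sign argument to conclude. Your presentation is slightly cleaner in that you explicitly pass to the variables $u_i=v_i/A_i$, but the underlying idea is identical.
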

\fussy
\begin{proof}
By taking the differential of the area formula $A_1A_2A_3(A_1+A_2+A_3)=1$, we have 
\begin{equation}
\label{diff area}
\begin{split}
&(2A_1A_2A_3+A_2A_3(A_2+A_3))v_1+(2A_1A_2A_3+A_1A_3(A_1+A_3))v_2\\&+(2A_1A_2A_3+A_1A_2(A_1+A_2))v_3=0.\end{split}\end{equation}
Since $A_1, A_2$ and $A_3$ are all positive, the equality cannot hold if all of $v_1,v_2, v_3$ are negative.
Therefore $\max_{i}\{\frac{v_i}{A_i}\}$ is non-negative.
If $\max_{i}\{\frac{v_i}{A_i}\}=0$, then none of $v_1, v_2, v_3$  are positive, and the equality \ref{diff area} shows that they must be all equal to $0$.
\end{proof}
%
%

\begin{proposition}
	\label{curves}
	Let  $A'=(A'_1,A'_2,A'_3)$ and $A''=(A''_1, A''_2,A''_3)$ be two points in $\frak{T}_1$. Then
	$$\eta(A',A'')=\inf_c\{l_F(c)\}$$
	where $c$ ranges over all piecewise $C^1$ curves contained in $\frak{T}_1$ such that $c(0)=A'$, $c(1)=A''$
	and where
$$l_F(c)=\int_{0}^1 F(\dot{c}(t)) dt.$$
	\end{proposition}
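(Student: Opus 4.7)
The plan is to prove the equality by establishing two inequalities. For the upper bound $\eta(A',A'') \geq \inf_c l_F(c)$, I would exhibit an explicit curve realising the value $\eta(A', A'')$, using the bigeodesic constructed in Corollary \ref{cor:bigeodesic}. For the lower bound $\eta(A',A'') \leq l_F(c)$ for every admissible $c$, I would integrate along the coordinate direction that achieves the maximum defining $\eta$.

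First I would handle the lower bound. Given any piecewise $C^1$ curve $c(t)=(A_1(t),A_2(t),A_3(t))$ in $\frak{T}_1$ with $c(0)=A'$ and $c(1)=A''$, pick an index $j\in\{1,2,3\}$ which achieves the maximum, so that
\[
\eta(A',A'')=\log A''_j-\log A'_j=\int_0^1\frac{\dot A_j(t)}{A_j(t)}\,dt.
\]
By the very definition of $F$, we have $\dot A_j(t)/A_j(t)\leq \max_i\{\dot A_i(t)/A_i(t)\}=F(\dot c(t))$ pointwise, so integrating yields
\[
\eta(A',A'')\leq\int_0^1 F(\dot c(t))\,dt=l_F(c).
\]
Taking the infimum over all such $c$ gives one inequality.

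For the upper bound, I would use the curve
\[
\alpha(t)=\bigl((A'_1)^{1-t}(A''_1)^t,\,(A'_2)^{1-t}(A''_2)^t,\,(A'_3)^{1-t}(A''_3)^t\bigr)
\]
and its rescaled version $\alpha'(t)=\lambda(t)\alpha(t)\in\frak{T}_1$, which is a bigeodesic by Corollary \ref{cor:bigeodesic}. Writing $B_i(t)=\lambda(t)(A'_i)^{1-t}(A''_i)^t$, we have
\[
\frac{\dot B_i(t)}{B_i(t)}=\frac{\dot\lambda(t)}{\lambda(t)}+\log A''_i-\log A'_i,
\]
so that
\[
F(\dot\alpha'(t))=\frac{\dot\lambda(t)}{\lambda(t)}+\max_i\{\log A''_i-\log A'_i\}=\frac{\dot\lambda(t)}{\lambda(t)}+\eta(A',A'').
\]
Integrating from $0$ to $1$ and noting that $\lambda(0)=\lambda(1)=1$ (since $A', A''\in\frak{T}_1$ already have unit area), the $\dot\lambda/\lambda$ term contributes zero, yielding $l_F(\alpha')=\eta(A',A'')$.

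Combining the two inequalities, we obtain the equality. The main subtlety, which is the non-negativity of $F$ established in Lemma \ref{non-zero}, is what allows the identification $\eta(A',A'')=\max_i\{\log A''_i-\log A'_i\}$ to be treated on the same footing as the Finsler length. Beyond this, both inequalities are direct once one recognises that the logarithmic differentiation trick turns $\eta$, defined by a maximum of ratios, into an integral that is dominated termwise by $F(\dot c)$, and that the geometric-mean interpolation $\alpha$ makes all three logarithmic derivatives $\log A''_i-\log A'_i$ constant in $t$, so the max is attained simultaneously along the entire path.
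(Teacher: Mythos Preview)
Your proof is correct and follows essentially the same approach as the paper's: the lower bound is obtained by the inequality $\int \max_i \geq \max_i \int$ (you phrase it by fixing the maximising index $j$ first, which is equivalent), and the upper bound is obtained by computing $l_F$ along a rescaled geodesic in $\frak{T}_1$. The only cosmetic difference is that the paper works with a generic curve $d$ satisfying $\dot A_i/A_i \leq \dot A_j/A_j$ and shows $l_F(\lambda d)=\log(\lambda(1)A_j(1))-\log(\lambda(0)A_j(0))$, whereas you specialise to the explicit bigeodesic of Corollary~\ref{cor:bigeodesic}, which makes $\lambda(0)=\lambda(1)=1$ and the logarithmic derivatives constant; the computations coincide.
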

	
	\begin{proof}
	By the usual approximation techniques, we only have to consider the case when $c$ is a $C^1$ curve.
		Let $c(t)=(A_1(t),A_2(t),A_3(t))$ be a $C^1$ curve in $\frak{T}_1$ such that $c(0)=A'$ and $c(1)=A''$. Then 
		$$l_F(c)=\int_0^1\max_i\{\frac{\dot{A}_i(t)}{A_i(t)}\}\ dt\geq \max_i\{ \int_0^1\frac{\dot{A}_i(t)}{A_i(t)}\ dt\}= \max_i \{\log(\frac{A_i''}{A_i'})\}=\eta(A',A'').$$
		
	Now we prove that $l_F(c)=\eta(A',A'')$ for the geodesics $c$ which we defined in Lemma \ref{cor:bigeodesic}.  Assume that 
	$$\max_i\{\frac{A_i''}{A_i'}\}=\frac{A_j''}{A_j'}.$$
	Let $c:[0,1]\to \frak{T}_1$ be a curve expressed by $c(t)=(A_1(t),A_2(t), A_3(t))$ such that
	$$\frac{\dot{A}_i(t)}{A_i(t)}\leq\frac{\dot{A}_j(t)}{A_j(t)}.$$
Then, 
$$l_F(c)=\int_0^1\max_i\{ \frac{\frac{d}{dt}(A_i(t))}{A_i(t)} \}\ dt=\int_0^1 \frac{\frac{d}{dt}(A_j(t))}{A_j(t)}\ dt$$
$$ = \log A''_j -\log A'_j =\eta(A',A'').$$
\end{proof}

%
%

The following theorem is an immediate consequence of Proposition \ref{curves}.

\begin{theorem}
Let 	$F$ be the following function  on the tangent space 
	$T(\frak{T}_1)$ of $\frak{T}_1$:
	$$ (A_1, A_2, A_3 ,v_1,v_2,v_3)\mapsto \max_{i}\{\frac{v_i}{A_i}\}.$$
	The metric $\eta$  is induced
	by the  Finsler structure on $\frak{T}_1$
	given by $F$.
\end{theorem}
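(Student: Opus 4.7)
The plan is to verify that the function $F$ meets the two formal conditions for being a Finsler structure, and then to read the length--distance identity off from Proposition \ref{curves}; there is essentially no new calculation to perform, since the work has been front-loaded into Lemma \ref{non-zero} and Proposition \ref{curves}.

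First I would check that $F$ is continuous on $T(\frak{T}_1)$. On the open submanifold where each $A_i$ is positive, each of the three scalar functions $(A_1,A_2,A_3,v_1,v_2,v_3)\mapsto v_i/A_i$ is continuous, and the pointwise maximum of finitely many continuous functions is continuous; so condition (1) of the definition of a Finsler structure holds.

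Next I would verify that for each point $A=(A_1,A_2,A_3)\in \frak{T}_1$, the restriction $F|_{T_A \frak{T}_1}$ is a weak norm. Non-negativity together with the non-degeneracy axiom (1) of Definition \ref{weak norm} is exactly the content of Lemma \ref{non-zero}: the constraint (\ref{diff area}) prevents all components $v_i$ from being simultaneously negative or simultaneously zero unless the vector itself vanishes. The positive homogeneity axiom (2) is immediate, since for $t>0$ one has $\max_i\{tv_i/A_i\}=t\max_i\{v_i/A_i\}$. The convexity axiom (3) follows from the standard fact that the maximum of a family of linear functionals is convex: for $t\in[0,1]$ and tangent vectors $v,w$,
$$\max_i\Bigl\{\frac{tv_i+(1-t)w_i}{A_i}\Bigr\}\leq t\max_i\Bigl\{\frac{v_i}{A_i}\Bigr\}+(1-t)\max_i\Bigl\{\frac{w_i}{A_i}\Bigr\}.$$
Thus $F|_{T_A \frak{T}_1}$ is a weak norm and condition (2) of the Finsler structure definition holds.

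Finally, to identify $\eta$ with the length metric induced by $F$, I would simply invoke Proposition \ref{curves}, which states exactly that for any $A',A''\in \frak{T}_1$,
$$\eta(A',A'')=\inf_c l_F(c),$$
the infimum being taken over piecewise $C^1$ curves with $c(0)=A'$ and $c(1)=A''$. By the definition of a Finsler metric, this is precisely the assertion that $\eta$ is induced by the Finsler structure $F$, which completes the proof. No step is hard in itself; the only place where real work was needed is the non-degeneracy of $F|_{T_A \frak{T}_1}$, which was already handled in Lemma \ref{non-zero} by using the differentiated Heron constraint.
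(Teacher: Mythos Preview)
Your proposal is correct and follows essentially the same approach as the paper: the paper states the theorem as ``an immediate consequence of Proposition \ref{curves}'', having already established non-degeneracy via Lemma \ref{non-zero} and remarked that the other weak-norm axioms are ``clearly satisfied''. You have simply spelled out those clear verifications (continuity, positive homogeneity, convexity) in more detail, which is fine but adds nothing new.
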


\section{The case of a  Euclidean structure adapted to a fixed triangulation of a surface} \label{s:surface}
Let $S$ be a compact surface possibly with boundary and $\mathcal{T}$  a finite  triangulation of $S$.
We denote the set of edges and vertices of $\mathcal{T}$ by
 $E(\mathcal{T})$ and 
 $V(\mathcal{T})$, respectively.

Let $\mathcal{E}(S,\mathcal{T})$ be the set of singular Euclidean metrics adapted to the triangulation $\mathcal{T}$. This means that the metric is locally Euclidean except perhaps at the vertices. Note that such a structure is determined by the collection of edge lengths of the triangulation, which can be chosen arbitrarily subject to the triangle inequalities for each face. We shall call such a metric a Euclidean structure on $(S,\mathcal{T})$.

Let $\mu$ be a Euclidean structure on $(S,\mathcal{T})$. There is a naturally induced notion of length of a piecewise $C^1$ curve on $S$. 
We
 denote by $l_{\mu}(c)$ the length of such a curve $c$. This defines a length structure $l_{\mu}$ on $S$ which induces in the usual way a metric in $S$ which we denote by $d_{\mu}$. If $M$ is a Lebesgue measurable subset of $S$, we denote its area 
 by $A_{\mu}(M)$.

 The map $\Phi: \mathcal{E}(S,\mathcal{T})\to (\R_{+}^*)^{E(\mathcal{T})}$ sending an element $\mu$ to the sequence $(l_{\mu}(e_i))$, where $e_i\in E(\mathcal{T})$, is injective. The image of this map is  the set of all $(a_i)\in (\R_{+}^*)^{E(\mathcal{T})}$
 such that whenever $e_i$, $e_j$ and $e_k$ are assigned to a face of  $\mathcal{T}$, the associated triple $a_i,a_j$ and $a_k$ satisfy the triangle inequality.

Let  $\mathcal{J}$ be a subset of $E(\mathcal{T})^3$ with the following properties. 

\begin{enumerate}
\item
For each $(i,j,k)\in \mathcal{J}$, $e_i$, $e_j$ and $e_k$ are edges of 
a face of  $\mathcal{T}$. 
\item
If $f$ is a face of $\mathcal{T}$ with edges $e_i$, $e_j$, and $e_k$,  then some permutation of $(i,j,k)$ is in $\mathcal{J}$.
\item
If $(i,j,k)$ is in $\mathcal{J}$, then no odd  permutation of $(i,j,k)$ is in $\mathcal{J}$.
\item
If $(i,j,k)$ is in $\mathcal{J}$, then all even permutations of $(i,j,k)$ are in $\mathcal{J}$.
 \end{enumerate}

It is quite easy to see the following.
\begin{proposition}
The map $\Psi: \mathcal{E}(S,\mathcal{T})\to (\R_+^*)^{\mathcal{J}}$ sending $[\mu]$ to $(A_{ijk})$, defined by  $$\displaystyle A_{ijk}=\frac{l_{\mu}(e_j)+l_{\mu}(e_k)-l_{\mu}(e_i)}{2},$$ is injective.
\end{proposition}
Assume that there are two elements $(i,j,k)$ and $(i',j',k')$ in $\mathcal{J}$ such that $i=i'$, $j\neq j'$ and $k\neq k'$. Take a point $A$ in the image of $\Psi$. Then we have the equality
$$A_{jki}+A_{kij}=A_{j'k'i'}+A_{k'i'j'},$$ corresponding to the interior edge $e_i$.
Conversely, this is the only constraint  for points in $(\R_+^*)^{\mathcal{J}}$ to be realised as edge lengths of $\mathcal T$.
It follows that the image of $\Psi$ is a subset of $(\R_+^*)^{\mathcal{J}}$ which is an intersection of hyperplanes corresponding to interior edges of the triangulation $\mathcal{T}$. 
We denote this set by $\mathcal{E}(S,\mathcal{T})$ as well.

  Let $A$ be an element of $\mathcal{E}(S,\mathcal{T})$. We define  $\mathrm{Ar}(A)$ to be the area of corresponding singular flat metric on $S$.
We denote the subset of $\mathcal{E}(S,\mathcal{T})$ which consists of elements $A$ with $\mathrm{Ar}(A)=1$ by $\mathcal{E}(S,\mathcal{T})_1$. Cleary, there is an action
of $\R_+^*$ on $\mathcal{E}(S,\mathcal{T})$ corresponding to scaling, and we have
$$\mathrm{Ar}(\lambda A)=\lambda^2 \mathrm{Ar}(A),\ \lambda\in \R_+^*.$$

 \begin{definition}
 Let $A=(A_{ijk})$ and $ B=(B_{ijk})$ be two elements of $ \mathcal{E}(S,\mathcal{T})$. We define 
 $$\eta(A,B):= \eta_C(A,B)=\log \max_{(i,j,k)\in \mathcal{J}}\frac{B_{ijk}}{A_{ijk}},$$
 \noindent where $C$ is the cone $(\R_+^*)^{\mathcal{J}}$.  
 \end{definition}

Observe that $\exp(\eta(\lambda A,\lambda' B))=\frac{\lambda'}{\lambda}\exp((A,B))$.
 The function $\eta$ is a version of the generalised  metric  studied in Section \ref{s:space} and it corresponds to the asymmetric Thompson distance we defined in \S  \ref{s:Thompson}.
 
 \begin{proposition}
 $\eta$ is a metric on $\mathcal{E}(S,\mathcal{T})_1$.
  \end{proposition}
 \begin{proof}
 The triangle inequality is obvious. Suppose that for $A, B \in \mathcal{E}(S,\mathcal{T})_1$, we have $\eta(A,B)\leq 0$. Then, $A_{ijk}\leq B_{ijk}$ for each $(i,j,k)\in \mathcal{J}$. If one of these inequalities is strict, 
 then $\mathrm{Ar}(A)<\mathrm{Ar}(B)$, which is a contradiction. Therefore $\eta(A,B)\leq 0$ implies $A=B$. 
 \end{proof}
 
\begin{theorem}
	\label{cor:bigeodesic-2}
	For any two points $u$ and $v$ of $\mathcal{E}(S,\mathcal{T})_1$, there exists a
	bigeodesic joining them with respect to $\eta$, which is expressed as $\lambda(t)((1-t)u+tv)$.
\end{theorem}

\begin{proof}
	Consider the map
	\[
	\Psi\colon \mathcal{E}(S,\mathcal{T})\to (\mathbb{R}_+^*)^{\mathcal{J}}.
	\]
	Let
	\[
	V=\operatorname{span}\bigl(\Psi(\mathcal{E}(S,\mathcal{T}))\bigr)
	\subset \mathbb{R}^{\mathcal{J}}.
	\]
	Then
	\[
	C'=\Psi(\mathcal{E}(S,\mathcal{T}))
	=
	V\cap (\mathbb{R}_+^*)^{\mathcal{J}}
	\]
	is an open cone in $V$. Since $\Psi$ is injective,
	we may identify $\mathcal{E}(S,\mathcal{T})$ with $C'$. Let us denote the cone
	$(\mathbb{R}_+^*)^{\mathcal{J}}$ by $C$.
	
	Moreover, under this identification, by Lemma \ref{lemma:subspace}, the
	restriction of $\eta_C$ to $C'$ is precisely the asymmetric Thompson distance
	$\eta_{C'}$ on the cone $C'$. Therefore, Lemma \ref{lemma:standard} implies
	that for any two points $u,v \in \mathcal{E}(S,\mathcal{T})_1$, the line
	segment
	\[
	\gamma(t)=(1-t)u+tv
	\]
	is a bigeodesic with respect to the asymmetric Thompson distance $\eta_C$
	on $C$. It follows from Remark \ref{remark:crucial} that the normalised
	affine segment
	\[
	\tilde{\gamma}(t)=\lambda(t)\gamma(t)
	=
	\lambda(t)\bigl((1-t)u+tv\bigr),
	\]
	where $\lambda(t)>0$ is chosen so that
	$\tilde{\gamma}(t)\in \mathcal{E}(S,\mathcal{T})_1$, is a bigeodesic for
	$\eta_C$. This implies the desired result since the metric $\eta$ is the
	restriction of the asymmetric Thompson distance on $C$ to
	$\mathcal{E}(S,\mathcal{T})_1$.
\end{proof}

  \begin{theorem}
 	\label{geodesic1}
 	Let $\alpha: [0,1]\to \mathcal{E}(S,\mathcal{T})_1$ be a continuous  path in $\mathcal{E}(S,\mathcal{T})_1$, and set $\alpha(t)=(A_{ijk}(t))$. Then $\alpha$ is a geodesic with respect to the metric $\eta$ if and only if there exists a triple  $(i',j',k')\in \mathcal{J}$ such that for each $t, t' \in [0,1]$ with $t\leq t'$  we have
 	$$ \frac{A_{ijk}(t')}{A_{ijk}(t)}\leq  \frac{A_{i'j'k'}(t')}{A_{i'j'k'}(t)}\ \text{for all}\ (i,j,k)\in \mathcal{J}.$$
 \end{theorem}
 \begin{proof}
 	This follows from Proposition \ref{main-geodesic}. Indeed a path in $\mathcal{E}(S,\mathcal{T})_1$ is a geodesic if and only if it is a geodesic with respect to the asymmetric Thompson distance $\eta_C$, where $C=(\R^*_+)^\mathcal{J}$.
 \end{proof}

\subsection{Finsler structure on $\mathcal{E}(S,\mathcal{T})_1$}

We start with the following.

\begin{proposition}
	$\mathcal{E}(S,\mathcal{T})_1$ is an embedded differentiable submanifold of $(\R_+^*)^{\mathcal{J}}$.
\end{proposition}
	\begin{proof}
	The argument is the same as the one we used for the space  $\frak{T}_1$: 
	Let $q: \mathcal{E}(S,\mathcal{T})\to \R^*_+$ be the area map  $A\mapsto \mathrm{Ar}(A)$. Then clearly this map is differentiable and $q(\lambda A)=\lambda^2q(A)$
	implies that either all points $\R^*_+$ are critical values of $q$, or none of them are. 
	By Sard's theorem, the first case is impossible, hence none of the points of $\R^*_+$ are critical values. It follows then that $1$ is a regular value, and $\mathcal{E}(S,\mathcal{T})_1$ is an embedded differentiable submanifold of $\mathcal{E}(S,\mathcal{T})$. Since  $\mathcal{E}(S,\mathcal{T})$ is an intersection of hyperplanes, the result follows.
	\end{proof}

Consider the following function defined on the tangent bundle $T(\mathcal{E}(S,\mathcal{T})_1)$:
$$F({\mathcal{T}}): ((A_{ijk}),(v_{ijk}))\mapsto \max_{(i,j,k)\in \mathcal J}\{\frac{v_{ijk}}{A_{ijk}}\},$$
where $(A_{ijk})$ lies in $\mathcal{E}(S,\mathcal{T})_1$ and $v_{ijk}$  are the coordinates of tangent vectors at the point $(A_{ijk})$.

\begin{proposition}
	Let  $A=(A_{ijk})$ and $B=(B_{ijk})$ be in $\mathcal{E}(S,\mathcal{T})_1$. Then
	$$\eta(A,B)=\inf\{l_F(c)\}$$
	\noindent where $c$ ranges over all piecewise $C^1$ curves in $\mathcal{E}(S,\mathcal{T})_1$ such that $c(0)=A$, $c(1)=B$.

\end{proposition}
\begin{proof}
	The proof is similar to that of Proposition~\ref{curves}.
	Let \(c(t)=(A_{ijk}(t))\) be a \(C^1\)-curve in
	\(\mathcal{E}(S,\mathcal{T}_1)\) such that
	\(c(0)=A=(A_{ijk})\) and \(c(1)=B=(B_{ijk})\). Then
	\[
	\begin{aligned}
		l_F(c)
		&=
		\int_0^1
		\max_{(i,j,k)\in \mathcal{J}}
		\left\{
		\frac{\dot A_{ijk}(t)}{A_{ijk}(t)}
		\right\}\,dt  \\
		&\geq
		\max_{(i,j,k)\in \mathcal{J}}
		\left\{
		\int_0^1
		\frac{\dot A_{ijk}(t)}{A_{ijk}(t)}\,dt
		\right\}  \\
		&=
		\max_{(i,j,k)\in \mathcal{J}}
		\left\{
		\log\frac{B_{ijk}}{A_{ijk}}
		\right\}
		=
		\eta(A,B).
	\end{aligned}
	\]
	
	We now prove that \(l_F(\gamma)=\eta(A,B)\) for the geodesic
	\(\gamma\) constructed in the proof of Theorem~\ref{cor:bigeodesic-2}.
	Let \(\gamma(t)=(A_{ijk}(t))\), with
	\[
	(A_{ijk}(0))=(A_{ijk})
	\qquad \text{and} \qquad
	(A_{ijk}(1))=(B_{ijk}).
	\]
	The curve \(\gamma\) is of class \(C^1\). By Theorem~\ref{geodesic1},
	there exists \((i',j',k')\in \mathcal{J}\) such that for all $0\leq t \leq t' \leq 1$ and $(i,j,k) \in \mathcal J$, we have $$\log A_{ijk}(t')-\log A_{ijk}(t)\leq \log A_{i'j'k'}(t')-\log A_{i'j'k'}(t).$$
	Hence for all \(t\in[0,1]\),
	\[
	\frac{\dot A_{ijk}(t)}{A_{ijk}(t)}
	\leq
	\frac{\dot A_{i'j'k'}(t)}{A_{i'j'k'}(t)}
	\]
	for every \((i,j,k)\in \mathcal{J}\), and
	\[
	\max_{(i,j,k)\in \mathcal{J}}
	\left\{
	\frac{B_{ijk}}{A_{ijk}}
	\right\}
	=
	\frac{B_{i'j'k'}}{A_{i'j'k'}}.
	\]
	It follows that
	\[
	\begin{aligned}
		l_F(\gamma)
		&=
		\int_0^1
		\max_{(i,j,k)\in \mathcal{J}}
		\left\{
		\frac{\dot A_{ijk}(t)}{A_{ijk}(t)}
		\right\}\,dt  \\
		&=
		\int_0^1
		\frac{\dot A_{i'j'k'}(t)}{A_{i'j'k'}(t)}\,dt  \\
		&=
		\log B_{i'j'k'}-\log A_{i'j'k'}  \\
		&=
		\log\frac{B_{i'j'k'}}{A_{i'j'k'}}
		=
		\eta(A,B).
	\end{aligned}
	\]
	This completes the proof.
\end{proof}
\begin{proposition}
	 For each $A\in \mathcal{E}(S,\mathcal{T})_1$, $F({\mathcal{T}})\lvert_{T_A(\mathcal{E}(S,\mathcal{T})_1)}$ is a Minkowski norm.
	\end{proposition}
	\begin{proof}
	Since the sum of the areas of all faces of $\mathcal T$ is $1$, by taking the derivative and repeating the argument of the proof of Lemma \ref{non-zero}, we can show that $F({\mathcal{T}})\lvert_{T_A(\mathcal{E}(S,\mathcal{T})_1)}$ is non-negative and takes positive values except for the zero vector.
	\end{proof}

The previous two propositions easily imply the following.

\begin{theorem}
Let 	$F({\mathcal{T}})$ be the following function  on the tangent bundle
	$T(\mathcal{E}(S,\mathcal{T})_1)$ of $\mathcal{E}(S,\mathcal{T})_1$:
	$$ ((A_{ijk}),(v_{ijk}))\mapsto \max_{(ijk)\in \mathcal J}\{\frac{v_{ijk}}{A_{ijk}}\}.$$
 The metric $\eta$  is induced
	by the  Finsler structure on $\mathcal{E}(S,\mathcal{T})_1$
	given by $F({\mathcal{T}})$.
\end{theorem}
\subsection{The Finsler structure of the asymmetric Thompson distance}
We now turn to the general setting of the asymmetric Thompson distance.
We shall show that this distance on an open  cone in $\R^n$ induces a Finsler structure on a submanifold of $\R^n$ whose tangent spaces are disjoint from the cone.
Moreover, in the case when every ray in the cone intersects  $N$ at a unique point, it will be shown that the  Finsler metric coincides with the original asymmetric Thompson metric.
This will give a general perspective of what we have proved in the preceding subsection.

	Let \(V\) be a finite-dimensional real vector space, and let
\(C\subset V\) be an open convex cone. Set \(K=\overline C\).

\begin{lemma}

	For every \(x\in C\) and \(\xi\in V\), we have
	\[
	\lim_{t\to 0^+}
	\frac{\eta_C(x,x+t\xi)}{t}
	=
	\sup_{\varphi\in K^*\setminus\{0\}}
	\frac{\varphi(\xi)}{\varphi(x)}.
	\]
\end{lemma}

\begin{proof}
	Since \(C\) is open, \(x+t\xi\in C\) for all sufficiently small
	\(t>0\). Using (\ref{dual}), we have
	\[
	\eta_C(x,y)
	=
	\log
	\sup_{\varphi\in K^*\setminus\{0\}}
	\frac{\varphi(y)}{\varphi(x)},
	\]
	and we get
	\[
	\eta_C(x,x+t\xi)
	=
	\log
	\sup_{\varphi\in K^*\setminus\{0\}}
	\frac{\varphi(x+t\xi)}{\varphi(x)}.
	\]
	Since $\varphi$ is linear,
	\[
	\frac{\varphi(x+t\xi)}{\varphi(x)}
	=
	1+t\frac{\varphi(\xi)}{\varphi(x)}.
	\]
	Therefore
	\[
	\eta_C(x,x+t\xi)
	=
	\log\left(
	1+t
	\sup_{\varphi\in K^*\setminus\{0\}}
	\frac{\varphi(\xi)}{\varphi(x)}
	\right).
	\]
	Setting
	\[
	A=
	\sup_{\varphi\in K^*\setminus\{0\}}
	\frac{\varphi(\xi)}{\varphi(x)},
	\]
	we obtain
	\[
	\frac{\eta_C(x,x+t\xi)}{t}
	=
	\frac{\log(1+tA)}{t}.
	\]
	Letting \(t\to 0^+\), we have
	\[
	\lim_{t\to 0^+}
	\frac{\eta_C(x,x+t\xi)}{t}
	=A,
	\]
	as required.
\end{proof}

\begin{remark}

	Consider the tangent bundle $TC$ identified to $C\times V$. Define
	\(F:TC\to\mathbb R\) by
	\[
	F(x,\xi)
	=
	\sup_{\varphi\in K^*\setminus\{0\}}
	\frac{\varphi(\xi)}{\varphi(x)}.
	\]
	Then:
	\begin{enumerate}
		\item \(F\) is continuous;
		\item \(F(x,t\xi)=tF(x,\xi)\) for every \(t>0\);
		\item for every \(s\in[0,1]\),
		$
		F(x,s\xi+(1-s)\xi')
		\le
		sF(x,\xi)+(1-s)F(x,\xi').
		$
	\end{enumerate}
\end{remark}
\begin{lemma}
	Let \(N\) be a submanifold of \(C\) such that
	\[
	T_xN\cap K=\{0\}
	\]
	for every \(x\in N\). Then \(F|_{TN}\) defines a Finsler structure on $N$.
	\end{lemma}

\begin{proof}
	By the previous remark, it only remains to prove positivity on non-zero
	tangent vectors. Let $x$ be a point on $N$, and $\xi$ a non-zero tangent vector at $x$.
	 Suppose, seeking a contradiction, that
	\[
	F(x,\xi)\le 0.
	\]
	Then
	\[
	\sup_{\varphi\in K^*\setminus\{0\}}
	\frac{\varphi(\xi)}{\varphi(x)}
	\le 0.
	\]
	Since $x$ lies in $C$, we have \(\varphi(x)>0\) for every
	nonzero dual vector $\varphi\in K^*$. 
	Hence
	\[
	\varphi(\xi)\le 0
	\qquad
	\text{for all } \varphi\in K^*.
	\]
	By the dual characterisation of \(K\) by $K^*$, this implies that
	\[
	-\xi\in K.
	\]
	Therefore,
	\[
	-\xi\in T_xN\cap K=\{0\},
	\]
	which gives \(\xi=0\), a contradiction. Hence \(F(x,\xi)>0\) for every
	\(0\neq \xi\in T_xN\). Thus we have shown that \(F|_{TN}\) is a Finsler structure.
\end{proof}

\begin{theorem}
	Let \(N\) be a submanifold of \(C\) such that
	\[
	T_xN\cap K=\{0\}
	\]
	for every \(x\in N\). Assume that every ray in \(C\) intersects
	\(N\) in exactly one point. Then the Finsler metric  induced by
	\(F|_{TN}\) coincides with the restriction of \(\eta_C\) to \(N\). To be more precise, for every \(x,y\in N\), we have
	\[
	\eta_C(x,y)
	=
	\inf_c \int_0^1 F(c(t),\dot c(t))\,dt,
	\]
	where the infimum is taken over all piecewise \(C^1\) curves
	\(c:[0,1]\to N\) such that \(c(0)=x\) and \(c(1)=y\).
\end{theorem}

\begin{proof}
	Let \(c:[0,1]\to N\) be a piecewise \(C^1\) curve joining \(x\) to \(y\).
	For every \(\varphi\in K^*\setminus\{0\}\),
	\[
	\frac{d}{dt}\log\varphi(c(t))
	=
	\frac{\varphi(\dot c(t))}{\varphi(c(t))}
	\le F(c(t),\dot c(t)).
	\]
	Integrating this, we have 
	\[
	\log\frac{\varphi(y)}{\varphi(x)}
	\le
	\int_0^1 F(c(t),\dot c(t))\,dt .
	\]
	Taking the supremum over \(\varphi\), we get
	\[
	\eta_C(x,y)
	\le
	\int_0^1 F(c(t),\dot c(t))\,dt .
	\]
	
	To obtain the opposite inequality, let  $z(t)$ be the segment 
	\[
	z(t)=(1-t)x+ty.
	\]
	By assumption, there is a positive function \(\lambda\), with
	\(\lambda(0)=\lambda(1)=1\), such that
	\[
	c(t)=\lambda(t)z(t)\in N.
	\]
	Set
	\[
	A=
	\sup_{\varphi\in K^*\setminus\{0\}}
	\frac{\varphi(y)}{\varphi(x)}.
	\]
	Then \(\eta_C(x,y)=\log A\). Since
	\[
	\dot c(t)=\dot\lambda(t)z(t)+\lambda(t)(y-x),
	\]
	we have
	\[
	F(c(t),\dot c(t))
	=
	\frac{\dot\lambda(t)}{\lambda(t)}
	+
	\sup_{\varphi\in K^*\setminus\{0\}}
	\frac{\varphi(y-x)}{\varphi(z(t))}.
	\]
	Setting \(r_\varphi=\varphi(y)/\varphi(x)\), we can write
	\[
	\frac{\varphi(y-x)}{\varphi(z(t))}
	=
	\frac{r_\varphi-1}{(1-t)+tr_\varphi}.
	\]
	Since the last expression is increasing in \(r_\varphi>0\),
	\[
	F(c(t),\dot c(t))
	=
	\frac{\dot\lambda(t)}{\lambda(t)}
	+
	\frac{A-1}{(1-t)+tA}.
	\]
	Therefore
	\[
	\int_0^1 F(c(t),\dot c(t))\,dt
	=
	\log\lambda(1)-\log\lambda(0)
	+
	\int_0^1\frac{A-1}{(1-t)+tA}\,dt
	=
	\log A.
	\]
	Hence this curve has length equal to \(\eta_C(x,y)\), and we have the opposite inequality.
\end{proof}

\section{Some questions regarding families of weighted metrics}\label{s:weighted}
We consider the family of metrics on $\frak{T}_1$, parametrised by $t\in [0,1]$ and defined for each such $t$ by the formula
$$\eta^a_t(X,Y)=(1-t)\eta(X,Y)+t\eta(Y,X),$$
for $X,Y \in \frak{T}_1$. Here, the superscript $a$ in $\eta^a_t$ stands for ``arithmetic". 

Let 	$F^a _t$ be the following function  on the tangent space 
$T(\frak{T}_1)$ of $\frak{T}_1$:

$$ (A_1, A_2, A_3 ,v_1,v_2,v_3)\mapsto [(1-t)\max_{i}\{\frac{v_i}{A_i}\}+t \max_{i}\{\frac{-v_i}{A_i}\}].$$

We have the following:

\begin{theorem}   For every $t\in [0,1]$, the metric $\eta^a_t$  is Finsler, and its infinitesimal Finsler structure on $\frak{T}_1$ is
	given by $F^a_t$. 
	\begin{proof}
		The theorem follows from Theorem 5.1 in \cite{SOP}  and Corollary \ref{cor:bigeodesic}.
	\end{proof}
\end{theorem}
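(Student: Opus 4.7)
The plan is to reduce the statement to two facts already in hand: (a) the distance $\eta$ is Finsler with infinitesimal norm $F$ (the preceding theorem), and (b) any two points of $\frak{T}_1$ are joined by a bigeodesic (Corollary \ref{cor:bigeodesic}). The asymmetric symmetrisation $F^a_t$ of a weak norm $F$ is the natural candidate for the infinitesimal structure of the symmetrisation $\eta^a_t$ of the metric; I simply have to check that (i) $F^a_t$ is actually a weak norm, (ii) it gives the correct upper bound $l_{F^a_t}(c)\geq \eta^a_t(X,Y)$ on every piecewise $C^1$ curve, and (iii) equality is attained along bigeodesics.

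For step (i), positive homogeneity in $v$ and the triangle inequality in $v$ are inherited from $F(v)$ and $F(-v)$, which are weak norms in the variable $v$ (the second being a weak norm because $v\mapsto F(-v)$ is again convex, positively homogeneous, and non-negative). Non-degeneracy requires a small argument: $F^a_t(A,v)=0$ for $t\in(0,1)$ forces $F(A,v)=F(A,-v)=0$, so by Lemma \ref{non-zero} applied to both $v$ and $-v$ we get $v=0$. For the boundary cases $t=0$ and $t=1$ the statement is already the content of the previous theorem (applied to $\eta$ and, for $t=1$, to the reverse metric, whose infinitesimal structure is $v\mapsto F(-v)$).

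For step (ii), let $c:[0,1]\to\frak{T}_1$ be any piecewise $C^1$ path from $X$ to $Y$, and write $\bar c(s)=c(1-s)$ for its reverse. Since $\eta$ is the length metric of $F$, we have $\eta(X,Y)\leq l_F(c)=\int_0^1 F(\dot c(t))\,dt$ and $\eta(Y,X)\leq l_F(\bar c)=\int_0^1 F(-\dot c(t))\,dt$. Multiplying by $1-t$ and $t$ respectively and adding gives
\[
\eta^a_t(X,Y)\;\leq\;\int_0^1\!\bigl[(1-t)F(\dot c)+t F(-\dot c)\bigr]\,dt\;=\;l_{F^a_t}(c),
\]
so $\eta^a_t(X,Y)\leq \inf_c l_{F^a_t}(c)$.

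For step (iii), apply Corollary \ref{cor:bigeodesic} to produce a bigeodesic $c$ from $X$ to $Y$. By the proof of Proposition \ref{curves}, the explicit geodesics constructed in Corollary \ref{cor:bigeodesic} realise the $F$-length, that is $l_F(c)=\eta(X,Y)$; since $c$ is a bigeodesic, the reversed curve $\bar c$ is a geodesic from $Y$ to $X$, and the same computation for the reverse shows $l_F(\bar c)=\eta(Y,X)$. Combining,
\[
l_{F^a_t}(c)=(1-t)l_F(c)+t\,l_F(\bar c)=(1-t)\eta(X,Y)+t\,\eta(Y,X)=\eta^a_t(X,Y),
\]
so the infimum in (ii) is attained. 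Together, (ii) and (iii) give $\eta^a_t(X,Y)=\inf_c l_{F^a_t}(c)$, which is the definition of a Finsler metric with infinitesimal structure $F^a_t$. The only delicate point is the existence of a single curve $c$ realising the lengths for both $F(\dot c)$ and $F(-\dot c)$ simultaneously; this is precisely what \emph{bigeodesic} means, which is why Corollary \ref{cor:bigeodesic} is the essential ingredient, and why an abstract version of this argument (Theorem 5.1 of \cite{SOP}) packages these two inputs into the conclusion.
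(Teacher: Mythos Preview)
Your proof is correct and follows exactly the same strategy as the paper: the paper invokes Theorem~5.1 of \cite{SOP} together with Corollary~\ref{cor:bigeodesic}, and you have simply unpacked what that external result says in this concrete situation, using the same two ingredients (the Finsler structure $F$ for $\eta$ from Proposition~\ref{curves} and the existence of bigeodesics). Your final sentence already identifies this correspondence.
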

 For $t\in [0,1]$, we define $\eta^a_t$ to be the metric on the space $\mathcal{E}(S,\mathcal{T})_1$ of singular Euclidean structures introduced in \S \ref{s:surface} by  assigning to each pair $(A,B)$ the value $$\eta^a_t(A,B)=(1-t)\eta(A,B)+t\eta(B,A).$$

\begin{theorem}
	For every $t\in [0,1]$, let $F({\mathcal{T}})^a_t$ be the following function  on the tangent space 
	$T(\mathcal{E}(S,\mathcal{T})_1)$ of $\mathcal{E}(S,\mathcal{T})_1$:
	$$ ((A_{ijk}),(v_{ijk}))\mapsto (1-t)\max_{(ijk)}\{\frac{v_{ijk}}{A_{ijk}}\}+t \max_{(ijk)}\{\frac{-v_{ijk}}{A_{ijk}}\}.$$
	The metric $\eta^a_t$  is induced
	by the  Finsler structure on $\mathcal{E}(S,\mathcal{T})_1$
	given by $F({\mathcal{T}})^a_t$. Furthermore,  for any two points of the metric space $(\mathcal{E}(S,\mathcal{T})_1,\eta^a_t)$, there is always a bigeodesic between them.
	\begin{proof}
		The proof follows immediately from Theorem 5.1  in \cite{SOP} and Theorem \ref{cor:bigeodesic-2}.
	\end{proof}
	
\end{theorem}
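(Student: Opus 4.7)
The plan is to apply the general principle of Theorem 5.1 in \cite{SOP}, whose two hypotheses---an underlying Finsler metric and the existence of bigeodesics between any two points---are supplied, respectively, by the preceding theorem (which identifies $\eta(\mathcal{T})$ as the length metric of $F(\mathcal{T})$) and by Corollary \ref{cor:bigeodesic-2}. Concretely, the target function decomposes as $F(\mathcal{T})^a_t(v) = (1-t) F(\mathcal{T})(v) + t\, F(\mathcal{T})(-v)$, so everything should reduce to two length computations.

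First I would verify that $F(\mathcal{T})^a_t$ is a continuous weak norm on each tangent space. Positive homogeneity and the triangle inequality for a weak norm pass through any convex combination of weak norms, so they are inherited directly from $F(\mathcal{T})$ applied to $v$ and to $-v$. For non-degeneracy with $t\in(0,1)$, vanishing of $F(\mathcal{T})^a_t(v)$ forces both $F(\mathcal{T})(v)=0$ and $F(\mathcal{T})(-v)=0$; the argument in the preceding weak norm proposition (the generalisation of Lemma \ref{non-zero} obtained by differentiating the area constraint on $\mathcal{E}(S,\mathcal{T})_1$) then applies to both $v$ and $-v$, yielding $v=0$. The endpoints $t\in\{0,1\}$ reduce to the previous theorem and its mirror.

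Next, for a piecewise $C^1$ curve $c:[0,1]\to \mathcal{E}(S,\mathcal{T})_1$ joining $A$ to $B$, write $\bar c$ for the reversed curve; linearity of the length integral gives
$$l_{F(\mathcal{T})^a_t}(c) = (1-t)\,l_{F(\mathcal{T})}(c) + t\,l_{F(\mathcal{T})}(\bar c) \geq (1-t)\eta(\mathcal{T})(A,B) + t\,\eta(\mathcal{T})(B,A) = \eta(\mathcal{T})^a_t(A,B),$$
where the inequality applies the previous theorem to each summand. To see that this lower bound is attained, take a bigeodesic $c$ for $\eta(\mathcal{T})$ joining $A$ to $B$, which is furnished by Corollary \ref{cor:bigeodesic-2}; the two geodesic properties of $c$ turn both length inequalities into equalities, so $c$ realises the infimum. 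The same curve is thus a geodesic for $\eta(\mathcal{T})^a_t$, and, since interchanging $t\leftrightarrow 1-t$ corresponds to reversing $c$, it is in fact a bigeodesic, which establishes the second assertion.

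The step that I expect to require the most care is confirming that Theorem 5.1 of \cite{SOP} is formulated in sufficient generality to cover the present setting of weak norms on tangent spaces of a submanifold cut out by hyperplanes in $(\mathbb{R}_+^*)^{\mathcal{J}}$; once that compatibility is checked, the remaining content is the routine length computation above together with the bigeodesic transport from $\eta(\mathcal{T})$ to $\eta(\mathcal{T})^a_t$.
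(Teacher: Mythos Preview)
Your proposal is correct and follows the same route as the paper, which consists of nothing more than invoking Theorem 5.1 of \cite{SOP} together with Corollary \ref{cor:bigeodesic-2}; you have simply unpacked those citations into the explicit weak-norm check and the length computation $l_{F(\mathcal{T})^a_t}(c)=(1-t)l_{F(\mathcal{T})}(c)+t\,l_{F(\mathcal{T})}(\bar c)$ that the cited result encapsulates.
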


One may ask several questions regarding the metrics $\eta^a_t$, for instance, whether they are all asymmetric except for $t=1/2$ and whether any two metrics $\eta^a_t$ and $\eta^a_{t'}$ are non-isometric for $t\not=t'$. One may ask similar questions regarding the metrics on $\frak{T}_1$ defined by 
$$\eta^m_t(X,Y)=\max\{(1-t)\eta(X,Y),t\eta(Y,X)\}.$$
Here, the superscript $m$ in $\eta^m_t$ stands for $\max$.

\section{The completion of an asymmetric metric space} \label{s:completion}

In this section, we develop a theory of completion of a metric space which is not necessarily symmetric.

Let $(X,d)$ be a (non-necessarily symmetric)  metric space.

\begin{definition}[Forward and backward Cauchy sequence]
\label{d:symmetry}
	 A sequence $(x_i)$ in $X$ is called a \emph{forward Cauchy sequence} if for every $\epsilon >0$ there exists an integer $N$ such that $d(x_i,x_{i+k})<\epsilon$ for any $i\geq N$ and $k\geq 0$. The sequence $(x_i)$ is called a \emph{backward Cauchy sequence} if for every $\epsilon >0$ there exists an integer $N$ such that $d(x_{i+k},x_i)<\epsilon$ for any $i\geq N$ and $k\geq 0$. The space $X$ is said to be \emph{forward complete} if for every forward Cauchy sequence $(x_n)$, there exists $x\in X$ such that $d(x_n,x)\to 0$ as $n\to \infty$. Similarly, $X$ is said to be \emph{backward complete} if for every backward Cauchy sequence $(x_n)$, there exists $x\in X$ such that $d(x,x_n)\to 0$ as $n\to \infty$.
\end{definition}

\begin{definition}[Convergence-symmetry property]\label{def:c-s}
	A non-necessarily symmetric metric space $(X,d)$ is said to have the convergence-symmetry property, or to be convergence-symmetric,  if for any two sequences $(p_n)$ and $(q_n)$ in $X$ such that $d(p_n,q_n)\to 0$ as $n\to \infty$, we have also
$d(q_n,p_n)\to 0$ as $n\to \infty$. 
\end{definition}

\begin{remark}
 Herbert Busemann introduced in Chapter 1 of his book \emph{Synthetic differential geometry}  \cite[p.\ 2]{Busemann-synthetic}  a notion of convergence property  for (non-necessarily symmetric) metric spaces which is weaker than our convergence-symmetry property. He worked under the condition that for any point $x$ in the space and for any sequence $(p_n)$,  we have $d(x,p_n)\to 0$ if and only if  $d(p_n,x)\to 0$.
With such a condition, he proved theorems such as the Hopf--Rinow theorem for metric spaces which possibly do not satisfy the symmetry axiom, but this did not allow him to define the completion of such a general metric space, as we do in the present paper.

The notions of completeness and completion without the convergence-symmetric property were studied in Algom--Kfir \cite{AK} and Appendix B of Huang--Ohshika--Pan--Papadopoulos \cite{HOPP} in settings which are more complicated than ours here.
\end{remark}

\begin{lemma}
	Assume that $(X,d)$ has  the convergence-symmetry property. Then a sequence in $X$ is forward  Cauchy if and only if it is backward Cauchy.
\end{lemma}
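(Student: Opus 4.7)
The plan is to prove both implications by the same contrapositive argument, extracting suitable subsequences and applying the convergence-symmetry property. I will focus on showing that forward Cauchy implies backward Cauchy; the reverse direction is identical, since swapping the roles of $(p_n)$ and $(q_n)$ in Definition \ref{def:c-s} shows that the implication $d(p_n,q_n)\to 0 \Rightarrow d(q_n,p_n)\to 0$ is in fact an equivalence.

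Suppose $(x_n)$ is forward Cauchy but, contrary to the claim, not backward Cauchy. Negating the definition of backward Cauchy, there exists $\epsilon_0>0$ such that for every $N$ one can find indices $i\geq N$ and $k\geq 0$ with $d(x_{i+k},x_i)\geq \epsilon_0$. Applying this successively for $N=1,2,3,\ldots$ produces strictly increasing indices $i_n\to\infty$ and integers $k_n\geq 0$ satisfying $d(x_{i_n+k_n},x_{i_n})\geq \epsilon_0$ for every $n$.

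Now set $p_n:=x_{i_n}$ and $q_n:=x_{i_n+k_n}$. Since $(x_n)$ is forward Cauchy and $i_n\to\infty$, for any $\epsilon>0$ we eventually have $d(x_{i_n},x_{i_n+k_n})<\epsilon$ (uniformly in $k_n\geq 0$), so $d(p_n,q_n)\to 0$. The convergence-symmetry property then forces $d(q_n,p_n)\to 0$, i.e.\ $d(x_{i_n+k_n},x_{i_n})\to 0$, contradicting the lower bound $\epsilon_0$. Hence $(x_n)$ must be backward Cauchy.

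There is no real obstacle here: the only content is translating the quantifier negation of ``backward Cauchy'' into two sequences of indices and then invoking convergence-symmetry. The subtlety to be careful about is that the forward Cauchy condition is uniform in $k\geq 0$, which is exactly what is needed to conclude $d(p_n,q_n)\to 0$ despite the fact that $k_n$ is not controlled.
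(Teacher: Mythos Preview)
Your proof is correct and follows essentially the same approach as the paper's own argument: negate backward Cauchy to extract index sequences along which the backward distance stays bounded below, use forward Cauchy to make the forward distance along the same indices tend to zero, and derive a contradiction from the convergence-symmetry property. Your write-up is slightly more explicit about extracting strictly increasing indices, but the content is identical.
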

	\begin{proof}
Let $(x_n)$ be a forward Cauchy sequence in $X$. 
Assume that $(x_n)$ is not backward Cauchy. 
Then we can find $\epsilon>0$ such that    for all $N>0$ there exist integers $m(N)\geq n(N)\geq N$ such that $d(x_{m(N)},x_{n(N))})\geq\epsilon$. Now since $(x_n)$ is forward Cauchy, we have $d(x_{n(N)},x_{m(N)})\to 0$ as $N\to \infty$. 
From the convergence-symmetry property we have $d(x_{m(N)},x_{n(N))})\to 0$ as well. 
This is a contradiction. Therefore every forward Cauchy sequence is backward Cauchy. By the same proof, the converse holds.  
	\end{proof}
	
Therefore in a metric space having the convergence-symmetry property, we can simply call a sequence Cauchy if it is forward (or equivalently backward) Cauchy.

\begin{definition}[Convergent sequence in a convergence-symmetric space]
	If $(X,d)$ is a metric space having the convergence-symmetry property, then for every sequence $(x_n)$ in $X$, we have $d(x_n,x)\to 0$ if and only $d(x,x_n)\to 0$ as $n\to \infty$. If this holds, we say that the sequence $(x_n)$ converges to $x$. Also observe that $(x_n)$ is Cauchy if and only if for each $\epsilon >0$ there is $N$ such that for all $n,m\geq N$ we have $d(x_n,x_m)< \epsilon$.	
\end{definition}

\begin{definition}[Complete convergence-symmetric space]
	A metric space having the convergence-symmetry property is called complete if every Cauchy sequence is convergent.
\end{definition}

\begin{definition}[Dense subset in a convergence-symmetric space]
	Let $(X,d)$ be a convergence-symmetric metric space. A subset $A$ of $X$ is called dense in $X$ if for each $x\in X$ there is a sequence $(x_n)$ in $A$ converging to $x$.
\end{definition}

 Let $(X,d)$ be a convergence-symmetric metric space. We  construct a completion of $X$ with respect to $d$. The outline is the same as the usual one followed in the case of symmetric metric spaces. We point out the steps which depend crucially on the fact that our space satisfies the convergence-symmetry property.
 
 (1)
 Consider the set of Cauchy sequences in $X$. We say that two Cauchy sequences $(p_n)$ and $(q_n)$ are equivalent if 
 $$d(p_n,q_n)\to 0\ \text{as} \ n\to \infty.$$
 It is clear from the definition that this defines  an equivalence relation on the set of Cauchy sequences in $X$.
 
 (2)
 If $(p_n)$ and $(q_n)$ are Cauchy sequences, then for any $n,m\geq 0$ we have 
 
 $$d(p_n,q_n)\leq d(p_n,p_m)+d(p_m,q_m)+d(q_m,q_n)$$
 It follows that the sequence of real numbers $(d(p_n,q_n))$ is a Cauchy sequence.  Thus it is convergent. 
 
 (3)
 Let $X^*$ be the set of equivalence classes of Cauchy sequences in $X$. If $P\in X^*$ and $Q\in X^*$ denote the equivalence classes of $(p_n)$ and $(q_n)$, then we define 
 $$\Delta(P,Q)=\lim_{n\to \infty }d(p_n,q_n).$$
 This limit exists since $(p_n)$ and $(q_n)$ are both forward and backward Cauchy.
 Also if $(p'_n)$
 and $(q'_n)$ are other representatives of $P$ and $Q$, then we have
 $$d(p'_n,q'_n)\leq d(p'_n,p_n)+d(p_n,q_n)+d(q_n,q'_n),$$
  therefore the two sequences $(d(p'_n,q'_n))$ and $(d(p_n,q_n))$ have the same limit. It follows that $\Delta$ is well defined. 
 
 (4)
 It is easy to see that $\Delta$ is a function satisfying the  axioms of a metric space.
 
 (5)
 We show that $(X^*, \Delta)$ has the convergence-symmetry property. 
 
 Let $(P_n)$ and $(Q_n)$ be two sequences in $X^*$ such that $\Delta(P_n,Q_n)\to 0$ as $n\to\infty$. 
 Let $(p_{n,m})_m$ and $(q_{n,m})_m$ be representatives for $P_n$ and $Q_n$, respectively. Then we have 
 $\lim_{n\to\infty}\lim_{m\to\infty}d(p_{n,m},q_{n,m})=0,$
 which means that for every $\epsilon >0$, there exist $N$ and $M(n)$ such that if $n\geq N$ and $m \geq M(n)$, then $d(p_{n,m}, q_{n,m}) < \epsilon$.
 This is equivalent to saying that there are $n(k)$ and $m(k)$ for $k \in \mathbb{N}$ such that for every $i(k)\geq n(k)$ and $j(k) \geq m(k)$, we have $\lim_{k \to \infty}d(p_{i(k), j(k)}, q_{i(k), j(k)}) \to 0$.
 Since $X$ has the convergence-symmetry property, this implies that $\lim_{k \to \infty} d(q_{i(k), j(k)}, p_{i(k), j(k)})=0$ for every $i(k) \geq n(k)$ and $j(k) \geq m(k)$.
 By the above equivalence, this implies that $\lim_{n\to\infty}\lim_{m\to\infty}d(q_{n,m},p_{n,m})=0,$ which means that $\Delta(Q_n, P_n) \to 0$.
 
 (6)
 We prove that $X^*$ is complete. 
 Since we already showed that $X^*$ is convergence-symmetric, we have only to prove that $X^*$ is forward complete.
 Let $(P_n)$ be a Cauchy sequence in $X^*$, and $(p_{n,m})_m$ a sequence which is a representative of $P_n$.
 Since $(P_n)$ is forward Cauchy, for any $k \in \mathbb{N}$, there exists an integer  $N(k)$ such that for any $n \geq N(k)$ and any non-negative integer $i$, we have $\Delta(P_{n+i}, P_n) < 1/k$.
 Without loss of generality, we can take $N(k)$ so that it is monotone increasing with respect to $k$.
 This means that for any $n\geq N(k)$, there is $M(n,i)$ such that for any $m \geq M(n,i)$, we have $d(p_{n+i, m}, p_{n, m}) < 1/k$.
 On the other hand, since each $(p_{n,m})$ is a forward Cauchy sequence, there is $M(n,k)$ such that for any $m \geq M(n,k)$ and any non-negative integer $i$, we have $d(p_{n, m+i}, p_{n,m}) < 1/k$.
 Again we can take $M(n,k)$ so that it is monotone increasing with respect to both $n$ and $k$.
 
 Now we consider the sequence $Q=(q_k):=(p_{N(k),M(N(k), k)})_k$.
 We can see that this sequence is forward Cauchy.
 Indeed for any non-negative integer $i$, we have 
 \begin{equation*}
 	\begin{split}
 		&d(p_{N(k+i), M(N(k+i), k+i)}, p_{N(k), M(N(k), k)}) \\
 		&\leq d(p_{N(k+i), M(N(k+i), k+i)}, p_{N(k), M(N(k+i), k+i)})+d(p_{N(k), M(N(k+i), k+i)}, p_{N(k), M(N(k),k)})\\& < 2/k.
 	\end{split}
 \end{equation*}
 
 We next show that $d(P_n, Q) \to 0$.
 For any $k$, we can choose $K$ such that if $n > K$, then $n>N(k)$.
 Then we have $\Delta(P_n, P_{N(k)}) < 1/k$, which means that there is $L(k)$ such that for any $m \geq L(k)$, we have $d(p_{n,m}, p_{N(k), m}) < 1/k$.
 We can assume that $L(k)\geq M(N(k), k)$, by replacing $L(k)$ with a bigger one if necessary.
 Then we also have $d(p_{N(k), m}, p_{N(k), M(N(k), k)})<1/k$,  hence $d(p_{n,m}, q_k)<2/k$.
 Since $Q$ is also backward Cauchy, for sufficiently large $k$, we can take a larger $m$ so that $d(q_k, q_m)< 1/k$, hence we have $\lim_{m\to \infty} d(p_{n,m}, q_m) \leq 3/k$.
 Thus we have shown that $\Delta(P_n, Q) \to 0$.
 
 (7)
 For each $p\in X$ consider the constant sequence all of whose terms are $p$. This is evidently a Cauchy sequence. Let $P_p$ be the element of $X^*$ which contains this sequence. It is obvious that $\Delta(P_p,P_q)=d(p,q).$
 Therefore there is a distance-preserving map 
 $\phi: X\to X^*$
 given by $\phi(p)=P_p$.
 
 (8)
 It is not difficult to see that $\phi(X)$ is dense in $X^*$. Indeed let $P\in X^*$ be represented by the sequence $(p_n)$. Let $P_n=Pp_n$. Then $P_n\to P$ as $n \to \infty$.
 
 (9)
 Let $(X',d')$ be a  metric space having the convergence-symmetry property and assume that it is complete.  
 Suppose that  there is a distance-preserving map $\phi':X\to X'$.
 Then $\phi'$ can be extended to a map $X^*\to X'$ by sending 
 an element $P$ represented by a Cauchy sequence $(p_n)$ to the unique limit  of $(\phi'(p_n))$ in $X'$.
 This means that $X^*$ is the minimal complete convergence-symmetric metric space containing $X$.

The following proposition will be useful. It gives a relation between the completeness of a metric space having the convergence-symmetry property and the completeness in the usual sense of a symmetric metric space.
\begin{proposition} \label{prop-crucial}
	Assume that $(X,d)$ is a metric space having the convergence-symmetry property. Let $d_{\mathrm{arith}}$ and $d_{\mathrm{max}}$ be the following metrics:
	$$d_{\mathrm{arith}}(x,y)=\frac{1}{2}(d(x,y)+d(y,x))$$
	$$d_{\mathrm{max}}=\max\{d(x,y), d(y,x)\}.$$
	Then the following three conditions are equivalent.
	\begin{enumerate}
		\item 
		$(X,d)$ is complete.
		\item 
		$(X,d_{\mathrm{arith}})$ is complete.
		\item 
		$(X,d_{\mathrm{max}})$ is complete.
		
	\end{enumerate}
	\end{proposition}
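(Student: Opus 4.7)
The plan is to reduce everything to a direct comparison of the three metrics using the convergence-symmetry property.

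First, I would observe the elementary inequalities
$$d_{\mathrm{arith}}(x,y)\leq d_{\mathrm{max}}(x,y)\leq 2\,d_{\mathrm{arith}}(x,y),\quad d(x,y)\leq 2\,d_{\mathrm{arith}}(x,y),\quad d(x,y)\leq d_{\mathrm{max}}(x,y),$$
valid for all $x,y\in X$. The first double inequality shows $d_{\mathrm{arith}}$ and $d_{\mathrm{max}}$ are Lipschitz equivalent symmetric metrics on $X$, so they have the same Cauchy sequences and the same convergent sequences; hence (2) $\Leftrightarrow$ (3) is immediate.

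Next, I would prove (1) $\Leftrightarrow$ (2) by matching Cauchy sequences and limits. By the remark following the definition of completeness, a sequence $(x_n)$ is Cauchy in $(X,d)$ if and only if for every $\varepsilon>0$ there is $N$ such that $d(x_n,x_m)<\varepsilon$ for all $n,m\geq N$. From $d_{\mathrm{arith}}(x_n,x_m)\leq d_{\mathrm{max}}(x_n,x_m)=\max\{d(x_n,x_m),d(x_m,x_n)\}$ it follows that any sequence Cauchy in $(X,d)$ is Cauchy in $(X,d_{\mathrm{arith}})$. Conversely, if $(x_n)$ is Cauchy in $(X,d_{\mathrm{arith}})$, then $d(x_n,x_m)\leq 2d_{\mathrm{arith}}(x_n,x_m)<2\varepsilon$ for large $n,m$, so $(x_n)$ is forward Cauchy in $(X,d)$, which by the lemma following Definition \ref{def:c-s} is equivalent to being Cauchy in $(X,d)$.

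For the convergence part, I would use the convergence-symmetry property directly: if $d(x_n,x)\to 0$ then by convergence-symmetry $d(x,x_n)\to 0$, so $d_{\mathrm{arith}}(x_n,x)\to 0$. Conversely, $d(x_n,x)\leq 2d_{\mathrm{arith}}(x_n,x)$ shows that convergence in $d_{\mathrm{arith}}$ implies convergence in $d$. Thus Cauchy sequences and their limits coincide in the two spaces, proving (1) $\Leftrightarrow$ (2).

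There is no real obstacle here; the only point that requires care is remembering to invoke the convergence-symmetry property (via the lemma and the remark) to interpret the condition $d(x_n,x_m)\to 0$ symmetrically, rather than only in the forward direction. Once this is done, the chain of inequalities makes every implication essentially tautological.
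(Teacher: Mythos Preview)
Your proof is correct and follows essentially the same approach as the paper: both use the bi-Lipschitz equivalence of $d_{\mathrm{arith}}$ and $d_{\mathrm{max}}$ for (2)$\Leftrightarrow$(3), and then use the convergence-symmetry property together with the elementary inequalities between $d$ and $d_{\mathrm{arith}}$ to match Cauchy sequences and their limits for (1)$\Leftrightarrow$(2). The only difference is organizational---you show once and for all that Cauchy sequences and limits coincide in the two spaces, while the paper proves the implications (1)$\Rightarrow$(2) and (2)$\Rightarrow$(1) separately---but the underlying argument is the same.
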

	\begin{proof}
		Since the symmetric metrics $d_{\mathrm{arith}}$ and $d_{\mathrm{max}}$ are bi-Lipschitz equivalent, it is clear that Conditions (2) and (3) are equivalent.   Assume that $(X,d)$ is complete. Let $(x_n)$ be a Cauchy sequence in $X$ with respect to the metric $d_{\mathrm{arith}}$. Then $(x_n)$ is Cauchy with respect to $d$. It follows that $(x_n)$ (both forward and backward) converges  to a point $x \in X$ with respect to the metric $d$. 
Therefore $(x_n)$ converges to $x$ with respect to the metric $d_{\mathrm{arith}}$. Hence $(X,d_{\mathrm{arith}})$ is complete. 

Now assume that $(X,d_{\mathrm{arith}})$ is complete. Let $(x_n)$ be a Cauchy sequence with respect to the metric $d$. By the convergence-symmetry property, this sequence is a Cauchy sequence with respect to $d_{\mathrm{arith}}$, hence $(x_n)$ converges to $x$ with respect to $d_{\mathrm{artih}}$, for some $x\in X$. This also implies that $(x_n)$ converges to $x$ with respect to $d$. Therefore, $(X,d)$ is complete.
	\end{proof}

\begin{remark}
In the paper \cite{Mennucci2014},  Mennucci studies geodesics in the context of asymmetric metric spaces. However he does not consider the notion of completeness or completion for such a space.
 \end{remark}
 
\section{On the topology defined by the metric $\eta$} \label{s:topology}
Let $S$ be a surface and $\mathcal{T}$ a triangulation of $S$. In this section, we study a topology on $\mathcal{E}(S,\mathcal{T})_1$ induced by the metric $\eta$.

\begin{proposition}
Let $A=(A_{ijk})\in \mathcal{E}(S,\mathcal{T})_1$ and $A(n)=(A_{ijk}(n))$ be respectively a point and a sequence in  $\mathcal{E}(S,\mathcal{T})_1$. 
Then, 
\begin{enumerate}
\item
$\eta(A(n),A)\to 0$ if and only if $A_{ijk}(n)\to A_{ijk}$ for all $(i,j,k)\in \mathcal{J}$.
\item
$\eta(A,A(n))\to 0$ if and only if $A_{ijk}(n)\to A_{ijk}$ for all $(i,j,k)\in \mathcal{J}$.
\end{enumerate}
\end{proposition}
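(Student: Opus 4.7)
My plan is to unwind the definition of $\eta(\mathcal{T})$ and use the unit-area constraint to convert the one-sided ratio bound given by $\eta\to 0$ into two-sided coordinate convergence. The two parts are proved in parallel; they differ only in whether $\eta(\mathcal{T})(A(n),A)\to 0$ delivers a $\liminf$ estimate from below on the coordinates or $\eta(\mathcal{T})(A,A(n))\to 0$ delivers a $\limsup$ estimate from above.

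The \emph{if} direction of both (1) and (2) is immediate: if $A_{ijk}(n)\to A_{ijk}$ for every $(i,j,k)\in \mathcal{J}$, then each ratio $A_{ijk}/A_{ijk}(n)$ and $A_{ijk}(n)/A_{ijk}$ tends to $1$; since $\mathcal{J}$ is finite, the maximum of these ratios and its logarithm also tend to $0$.

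For the \emph{only if} direction of (1), $\eta(\mathcal{T})(A(n),A)\to 0$ unfolds to $\max_{(i,j,k)} A_{ijk}/A_{ijk}(n)\to 1$, which gives $\liminf_n A_{ijk}(n)\geq A_{ijk}$ for every $(i,j,k)$. I first check that $(A(n))$ is bounded: each coordinate is eventually at least $A_{ijk}/2>0$, so if some $A_{i_0 j_0 k_0}(n)\to +\infty$ along a subsequence, Heron's formula applied to the face containing this index (whose other two coordinates are bounded below) would give unbounded area for that face, contradicting $\mathrm{Ar}(A(n))=1$. By Bolzano--Weierstrass, any subsequence has a convergent sub-subsequence with limit $L=(L_{ijk})$ satisfying $L_{ijk}\geq A_{ijk}$ and, by continuity of the area function, $\mathrm{Ar}(L)=1$. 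Since the per-face area $\sqrt{(A+B+C)ABC}$ is strictly increasing in each of its three positive arguments, having $L_{ijk}>A_{ijk}$ for some index would force $\mathrm{Ar}(L)>\mathrm{Ar}(A)=1$; hence $L=A$. Every subsequential limit being $A$, the bounded sequence $A(n)$ itself converges to $A$, which is precisely the coordinatewise convergence we want.

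The \emph{only if} direction of (2) is symmetric. Now $\eta(\mathcal{T})(A,A(n))\to 0$ yields $\limsup_n A_{ijk}(n)\leq A_{ijk}$, which bounds the sequence from above, and if some subsequential limit $L$ had a zero coordinate then the area of the associated face would vanish in the limit, forcing $\mathrm{Ar}(L)<1$ and contradicting $\mathrm{Ar}(A(n))=1$ together with the continuity of area. So any subsequential limit satisfies $0<L_{ijk}\leq A_{ijk}$, and the same strict monotonicity of Heron's formula gives $L=A$. The principal bookkeeping obstacle I anticipate is handling possibly degenerate subsequential limits (coordinates approaching $0$ or $+\infty$); this is resolved by extending the area function continuously to the boundary of $(\mathbb{R}_+^*)^{\mathcal{J}}$ and noting that we only ever compare areas of two configurations already lying in $\mathcal{E}(S,\mathcal{T})$, so the linear interior-edge constraints never need to be varied directly.
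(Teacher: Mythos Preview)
Your proof is correct and follows essentially the same strategy as the paper: extract a one-sided coordinate bound from $\eta\to 0$, then use the unit-area constraint together with the strict monotonicity of Heron's formula on each face to upgrade this to two-sided convergence. Your version is in fact more careful than the paper's, since you explicitly rule out coordinates escaping to $+\infty$ before invoking Bolzano--Weierstrass, whereas the paper passes to a subsequence with existing limits without addressing boundedness (and its stated inequality $\lim_n A_{i'j'k'}(n)<A_{i'j'k'}$ has the wrong sign for part~(1)).
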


\begin{proof}
We only prove the first part. The proof of the second part is quite similar.
If $A_{ijk}(n)\to A_{ijk}$ for every $(i,j,k) \in \mathcal J$, then $\max\{A_{ijk}/A_{ijk}(n)\}\to 1$, therefore $\eta_{\mathcal{T}}(A(n),A)\to 0$.  

For the other implication, assume that $\eta(A(n),A)\to 0$. Then $\max\{A_{ijk}/A_{ijk}(n)\}\to 1$. 
Suppose, seeking a contradiction, that there exists $(i',j',k')\in \mathcal{J}$ such that 
$\lim_{n\to\infty}\frac{A_{i'j'k'}(n)}{A_{i'j'k'}}\neq 1$. 
Passing to subsequences, we may assume that
\begin{enumerate}
\item
$\lim_{n\to\infty}A_{ijk}(n)$ exists for each $(i,j,k)$; and
\item
$\lim_{n\to \infty }A_{i'j'k'}(n)< A_{i'j'k'}$.
\end{enumerate} 
Then,  by (\ref{f:Heron}), we have
$$1=\mathrm{Ar}(A)>\mathrm{Ar}(\lim_{n\to \infty}\{A_{ijk}(n)\})=1,$$
which is a contradiction.
\end{proof}

This proposition immediately implies the following.

\begin{corollary}
$\eta(A(n),A)\to 0$ if and only if $\eta(A,A(n))\to 0$.
\end{corollary}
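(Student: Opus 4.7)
The plan is to derive this corollary as an immediate consequence of the proposition just proved, with no extra work needed. The proposition supplies two biconditionals whose right-hand sides are identical: part (1) states that $\eta(\mathcal{T})(A(n),A)\to 0$ is equivalent to the coordinate-wise convergence $A_{ijk}(n)\to A_{ijk}$ for all $(i,j,k)\in \mathcal{J}$, and part (2) states that $\eta(\mathcal{T})(A,A(n))\to 0$ is equivalent to the same coordinate-wise convergence. Thus both sides of the corollary's biconditional are characterised by the same third condition, and chaining the two equivalences produces the desired statement.

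Concretely, I would write: ``Assume $\eta(\mathcal{T})(A(n),A)\to 0$. By part (1) of the proposition, $A_{ijk}(n)\to A_{ijk}$ for every $(i,j,k)\in \mathcal{J}$. By part (2) of the proposition, this forces $\eta(\mathcal{T})(A,A(n))\to 0$. The reverse implication is obtained by applying the two parts of the proposition in the opposite order.'' That is the entire argument.

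There is no genuine obstacle here; the work has already been absorbed into the proposition, whose two parts rely on the same underlying fact — Heron's formula (\ref{f:Heron}) is continuous and strictly monotone in each coordinate $A_{ijk}$, so any subsequential limit of $(A(n))$ in $\mathcal{E}(S,\mathcal{T})_1$ whose max-ratio to $A$ tends to $1$ must coincide with $A$ coordinate-wise. This monotonicity argument is symmetric under swapping the roles of $A_{ijk}$ and $A_{ijk}(n)$, which is precisely why forward and backward convergence admit a common coordinate characterization, and hence why the convergence-symmetry property of Definition \ref{def:c-s} holds here for free.
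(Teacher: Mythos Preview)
Your proposal is correct and matches the paper's approach exactly: the paper simply states that the corollary is an immediate consequence of the preceding proposition, which is precisely the chaining of the two biconditionals that you describe. Your additional remarks about Heron's formula are accurate but unnecessary for the corollary itself, since that work is indeed already absorbed into the proposition.
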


Consider the following metrics:

\begin{enumerate}
\item
$\eta_{\mathrm{arith}}(A,B)=\frac{1}{2}(\eta(A,B)+\eta(B,A))=\eta^a_{1/2}(A,B)$.
\item
$\eta_{\mathrm{max},}(A,B)=\max\{\eta(A,B),\eta(B,A)\}$.

\end{enumerate}
These two metrics are bi-Lipschitz equivalent. By the topology induced by $\eta$, we mean the topology induced by $\eta_{\mathrm{arith}}$ or $\eta_{\mathrm{max}}$ (the two are equivalent).

\begin{remark}
Let $A=(A_{ijk})$ and $A(n)=(A_{ijk}(n))$ be in $\mathcal{E}(S,\mathcal{T})$. Then, the following are equivalent:
\begin{enumerate}
\item
$A_{ijk}(n)\to A_{ijk}$ for each $(i,j,k)$.
\item
$\eta_{\mathrm{max}}(A(n),A)\to 0$.
\item
$\eta_{\mathrm{arith}}(A(n),A)\to 0$.
\end{enumerate}
It follows that the usual topology of $\mathcal{E}(S,\mathcal{T})$ as a subspace of $\R^{\mathcal{J}}$ and the topology induced by
$\eta_{\mathrm{max}}$ (or equivalently $\eta_{\mathrm{arith}}$) are the same.
\end{remark}

\begin{corollary}
$\eta^a_t(A(n),A)\to 0$ if and only if $\eta^a_t(A,A(n))\to 0$.
\begin{proof}
	The case where $t=0$ or $t=1$ has been dealt with above. If $t\in(0,1)$, then the claim is obvious. 
	\end{proof}
\end{corollary}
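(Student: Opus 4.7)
The plan is to reduce the statement to the preceding corollary (the $t\notin\{0,1\}$ case aside, which the authors dismiss as ``obvious'') by exploiting the non-negativity of $\eta(\mathcal{T})$ on $\mathcal{E}(S,\mathcal{T})_1$. First I would record that the proposition which establishes that $\eta(\mathcal{T})$ is a metric actually shows $\eta(\mathcal{T})(A,B)\geq 0$ for every $A,B\in\mathcal{E}(S,\mathcal{T})_1$: indeed, $\eta(\mathcal{T})(A,B)\leq 0$ forces $A_{ijk}\geq B_{ijk}$ for all $(i,j,k)\in\mathcal{J}$, and if any inequality were strict Heron's formula would give $\mathrm{Ar}(B)<\mathrm{Ar}(A)=1$, contradicting $\mathrm{Ar}(B)=1$. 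Thus both summands in the definition $\eta(\mathcal{T})^a_t(A,B)=(1-t)\eta(\mathcal{T})(A,B)+t\eta(\mathcal{T})(B,A)$ are individually non-negative.

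For $t=0$ the function $\eta(\mathcal{T})^a_t$ equals $\eta(\mathcal{T})(\cdot,\cdot)$, and for $t=1$ it equals $\eta(\mathcal{T})(\cdot,\cdot)$ with arguments reversed; in either extremal case the claim is a literal restatement of the preceding corollary. For $t\in(0,1)$, suppose $\eta(\mathcal{T})^a_t(A(n),A)\to 0$. Since $1-t>0$, $t>0$ and both $\eta(\mathcal{T})(A(n),A)$ and $\eta(\mathcal{T})(A,A(n))$ are non-negative, each of them individually must tend to $0$ as $n\to\infty$. Applying the preceding corollary to each (which of course is not even needed now, as both limits are already known to vanish), we obtain
\[
\eta(\mathcal{T})^a_t(A,A(n)) = (1-t)\eta(\mathcal{T})(A,A(n))+t\eta(\mathcal{T})(A(n),A)\longrightarrow 0.
\]
The reverse implication is symmetric, upon swapping the roles of $(1-t)$ and $t$.

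There is really no obstacle here: the only point which requires a moment's thought is the positivity of $\eta(\mathcal{T})$ on $\mathcal{E}(S,\mathcal{T})_1$, which is what allows one to extract convergence of each summand from convergence of their non-negative linear combination. Once this is noted, the corollary drops out of the preceding one by a two-line argument in each direction.
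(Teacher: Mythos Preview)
Your argument is correct and matches the paper's approach exactly: the extremal cases $t=0,1$ reduce to the preceding corollary, and for $t\in(0,1)$ the paper simply says ``the claim is obvious,'' which is precisely the non-negativity argument you spell out. You have faithfully unpacked what ``obvious'' means here.
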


\section{Completeness of the space $\frak{T}_1$} \label{s:completeness}

We shall use the following elementary lemma.

\begin{lemma}
\label{simple-inequality}
Let $a_1, a'_1,\dots a_n, a'_n$ be positive real numbers. Then
$$\frac{a'_1+\dots +a'_n}{a_1+\dots+a_n}\leq \max_i\{\frac{a'_i}{a_i}\}.$$
\end{lemma}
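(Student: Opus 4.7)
The plan is to prove this by a direct one-line argument using the definition of the maximum. Setting $M = \max_i\{a'_i/a_i\}$, the definition gives $a'_i/a_i \leq M$ for every index $i$, and since each $a_i$ is positive, this rearranges to $a'_i \leq M a_i$ for every $i$.

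Next I would sum these $n$ inequalities to obtain $a'_1 + \dots + a'_n \leq M(a_1 + \dots + a_n)$. Since the denominator $a_1 + \dots + a_n$ is strictly positive (as a sum of positive numbers), dividing both sides by it yields the desired inequality
\[
\frac{a'_1 + \dots + a'_n}{a_1 + \dots + a_n} \leq M = \max_i\left\{\frac{a'_i}{a_i}\right\}.
\]

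There is essentially no obstacle here: the statement is the standard mediant inequality, and the argument is self-contained and purely algebraic. The only thing worth flagging is that positivity of the $a_i$ is used in two places, namely when clearing the denominators in each individual inequality $a'_i/a_i \leq M$, and when dividing the summed inequality by $a_1 + \dots + a_n$; both uses are harmless under the hypotheses of the lemma.
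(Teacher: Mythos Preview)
Your proof is correct and is essentially the same as the paper's own argument: the paper writes $a'_1+\dots+a'_n = a_1\frac{a'_1}{a_1}+\dots+a_n\frac{a'_n}{a_n}$ and bounds each ratio by the maximum, which is exactly your step $a'_i \leq M a_i$ summed over $i$.
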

\begin{proof}
We have 
$$a'_1+a'_2+\dots +a'_n=a_1\frac{a'_1}{a_1}+\dots+a_n \frac{a'_n}{a_n}\leq \max_i\{\frac{a'_i}{a_i}\}(a_1+\dots +a_n),$$
\noindent and the result follows.
\end{proof}

\begin{proposition}
	Let $A(n)=(A_1(n),A_2(n),A_3(n)))$ and $A'(n)=(A'_1(n),A'_2(n),A'_3(n))$ be two  sequences in  $\frak{T}_1$. We have
		$$\eta(A(n),A'(n))\to 0\ \text{ if and only if}\ \log A'_{i}(n)-\log A_{i}(n)\to 0\ \text{ for all} \ i=1,2, 3.$$
	
\end{proposition}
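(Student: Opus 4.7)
The ``if'' direction is immediate from the definition of $\eta$: if $\log A'_i(n)-\log A_i(n)\to 0$ for each $i$, then the maximum over $i$ of these quantities tends to $0$, which is $\eta(A(n),A'(n))$.

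For the ``only if'' direction, the plan is to combine the upper bound coming from the hypothesis with the area constraint supplied by Heron's formula. Set $r_i(n)=A'_i(n)/A_i(n)$ and $M_n=\max_i r_i(n)$. The assumption $\eta(A(n),A'(n))\to 0$ says $M_n\to 1$, which in particular gives $\limsup_n r_i(n)\leq 1$ for each $i$. Since $A(n),A'(n)\in\frak{T}_1$, Heron's formula \eqref{f:Heron} yields
\[
(A_1+A_2+A_3)A_1A_2A_3 \;=\; 1 \;=\; (A'_1+A'_2+A'_3)A'_1A'_2A'_3,
\]
where I have suppressed the dependence on $n$. Taking the ratio gives
\[
\frac{A'_1(n)+A'_2(n)+A'_3(n)}{A_1(n)+A_2(n)+A_3(n)}\cdot r_1(n)r_2(n)r_3(n)\;=\;1.
\]
By Lemma \ref{simple-inequality}, the first factor is at most $M_n$, so $r_1(n)r_2(n)r_3(n)\geq 1/M_n$; on the other hand, the pointwise bound $r_i(n)\leq M_n$ gives $r_1(n)r_2(n)r_3(n)\leq M_n^3$. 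Since $M_n\to 1$, both bounds force $r_1(n)r_2(n)r_3(n)\to 1$.

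To finish, I would extract convergence of each coordinate from convergence of the product together with the upper bound. Fix $i$, and let $\{j,k\}=\{1,2,3\}\setminus\{i\}$. Then $r_j(n)r_k(n)\leq M_n^2\to 1$, so $\limsup_n r_j(n)r_k(n)\leq 1$. Combined with $r_1(n)r_2(n)r_3(n)\to 1$ and positivity, this gives $\liminf_n r_i(n)\geq 1$; together with $\limsup_n r_i(n)\leq 1$ we get $r_i(n)\to 1$, i.e., $\log A'_i(n)-\log A_i(n)\to 0$. There is no serious obstacle here: the whole argument is driven by Heron's formula and Lemma \ref{simple-inequality}, and the only mild subtlety is the passage from ``each factor $\leq M_n$ and product $\to 1$'' to ``each factor $\to 1$'', which uses positivity in an essential way.
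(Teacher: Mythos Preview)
Your proof is correct and follows essentially the same approach as the paper: both arguments combine the area identity from Heron's formula with Lemma~\ref{simple-inequality} to control the product $r_1(n)r_2(n)r_3(n)$, and then deduce convergence of each $r_i(n)$ from the upper bound $r_i(n)\leq M_n$. The only cosmetic difference is that the paper argues by contradiction after passing to subsequences, whereas you give a direct $\liminf$/$\limsup$ argument; your packaging is slightly cleaner but the underlying mechanism is identical.
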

\begin{proof}
If $\log A'_i(n)-\log A_i(n)\to 0$, then $A_i'(n)/A_i(n)\to 1$ for each $i$. It follows that $\eta(A(n),A'(n))\to 0$.

For the other implication, assume that $\eta(A(n),A'(n))\to 0$. It follows that $\max\{A'_{i}(n)/A_i(n)\}\to 1$.
Seeking a contradiction, we assume that there exists $i'$ such that $\lim_{n\to \infty}\frac{A'_{i'}(n)}{A_{i'}(n)}\neq 1$. 
Then we may assume the following.
\begin{enumerate}
\item
$\lim_{n\to \infty}\frac{A'_1(n)}{A_1(n)}=1$.
\item
$\lim_{n\to\infty}\frac{A'_2(n)}{A_2(n)}$ exists and is strictly less than $1$.
\item
$\lim_{n\to \infty}\frac{A'_3(n)}{A_3(n)}$ exists and is less than or equal to $1$.
\end{enumerate}

\noindent From Lemma \ref{simple-inequality}, we see that

$$1=\frac{[\mathrm{Ar}(A'(n))]^2}{[\mathrm{Ar}(A(n))]^2}=\frac{A'_1(n)A'_2(n)A'_3(n)(A'_1(n)+A'_2(n)+A'_3(n))}{A_1(n)A_2(n)A_3(n)(A_1(n)+A_2(n)+A_3(n))}\leq\frac{A'_1(n)A'_2(n)A'_3(n)}{A_1(n)A_2(n)A_3(n)}\max_i\{\frac{A'_i(n)}{A_i(n)}\}$$
 Letting $n\to \infty$,  we reach a contradiction.
\end{proof}
	
\begin{corollary}
The space $\frak{T}_1$ has the convergence-symmetry property.
\end{corollary}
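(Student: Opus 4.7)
The plan is to deduce the corollary almost immediately from the preceding proposition, which characterizes the condition $\eta(A(n),A'(n)) \to 0$ in terms of the coordinatewise condition $\log A'_i(n) - \log A_i(n) \to 0$ for $i = 1, 2, 3$. The key observation is that this coordinatewise characterization is manifestly symmetric in the roles of $A(n)$ and $A'(n)$.

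More precisely, suppose $(A(n))$ and $(A'(n))$ are two sequences in $\frak{T}_1$ with $\eta(A(n), A'(n)) \to 0$. First I would apply the preceding proposition to conclude that $\log A'_i(n) - \log A_i(n) \to 0$ for every $i \in \{1, 2, 3\}$. Next I observe that this is the same statement as $\log A_i(n) - \log A'_i(n) \to 0$ for every $i$, since multiplying a real sequence by $-1$ preserves the property of converging to $0$. Finally I apply the preceding proposition in the other direction (with the roles of $A$ and $A'$ interchanged) to conclude that $\eta(A'(n), A(n)) \to 0$. This establishes exactly the convergence-symmetry property from Definition \ref{def:c-s}.

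There is essentially no obstacle here: the whole content of the corollary is repackaged in the symmetry of the coordinate characterization furnished by the preceding proposition, and the proof is a two-line application. The only thing worth a brief remark is that the characterization given in the proposition is stated as an \textit{if and only if}, so both directions are available, which is what allows the short symmetric argument.
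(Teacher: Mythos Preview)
Your proof is correct and is precisely the argument the paper intends: the corollary is stated without proof because it follows immediately from the symmetry of the coordinatewise characterization in the preceding proposition, exactly as you describe.
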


\begin{theorem}\label{thm:complete}
$\frak{T}_1$ is complete with respect to the metric $\eta$.		
\end{theorem}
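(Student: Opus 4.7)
The plan is to use the convergence-symmetry property of $\frak{T}_1$, which was established just above, to reduce the Cauchy condition to the convergence of three scalar sequences. Let $(A(n))_n$, with $A(n)=(A_1(n),A_2(n),A_3(n))$, be a Cauchy sequence in $(\frak{T}_1,\eta)$ in the sense of Definition \ref{d:symmetry}. By the convergence-symmetry property, this sequence is both forward and backward Cauchy. (Equivalently, one could invoke Proposition \ref{prop-crucial} and argue with the symmetrisation $\eta_{\max}$; the two approaches amount to the same thing.)

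Unravelling the definition of $\eta$, for every $\epsilon>0$ there is an $N$ such that for all $n,m\geq N$,
$$\max_i \frac{A_i(m)}{A_i(n)} < e^{\epsilon} \quad \text{and} \quad \max_i \frac{A_i(n)}{A_i(m)} < e^{\epsilon}.$$
Taking logarithms coordinatewise gives $|\log A_i(n)-\log A_i(m)|<\epsilon$ for each $i\in\{1,2,3\}$. Hence $(\log A_i(n))_n$ is a Cauchy sequence in $\R$ and converges to some $\alpha_i\in\R$. Setting $A^*:=(e^{\alpha_1},e^{\alpha_2},e^{\alpha_3})\in(\R_+^*)^3$, we obtain coordinatewise convergence $A(n)\to A^*$ in $(\R_+^*)^3$ with each $A_i^*>0$.

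It remains to check that $A^*\in\frak{T}_1$ and that $A(n)\to A^*$ with respect to $\eta$. Since Heron's formula \eqref{f:Heron} defines a continuous function on $(\R_+^*)^3$, we have $\mathrm{Ar}(A^*)=\lim_n \mathrm{Ar}(A(n))=1$, so $A^*$ lies in $\frak{T}_1$. The positivity of each $A_i^*$ then gives $\eta(A(n),A^*)=\log\max_i\{A_i^*/A_i(n)\}\to 0$ as $n\to\infty$, as required.

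The main obstacle, modest as it is, lies in ensuring that the coordinates neither collapse to $0$ nor blow up in the limit, and this is precisely where the combination of forward and backward Cauchy conditions (that is, the convergence-symmetry property) is used: they jointly control the logarithmic ratios, and hence the individual coordinates, inside a compact subinterval of $\R_+^*$. Once this is in hand, continuity of Heron's formula takes care of staying in the unit-area locus $\frak{T}_1$.
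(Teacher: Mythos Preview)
Your proof is correct and follows essentially the same route as the paper's. The paper invokes Proposition~\ref{prop-crucial} to pass to $\eta_{\max}$, then uses the isometry $(B_1,B_2,B_3)\mapsto(e^{B_1},e^{B_2},e^{B_3})$ between $(\R^3,\max_i|B_i'-B_i|)$ and $((\R_+^*)^3,\max_i|\log A_i'-\log A_i|)$ together with the closedness of the preimage of $\frak{T}_1$; you carry out the same logarithmic-coordinate argument by hand on a given Cauchy sequence and replace the closedness step by continuity of Heron's formula, which amounts to the same thing.
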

	\begin{proof}
By Proposition \ref{prop-crucial} it is sufficient to show the metric $\eta_{max}(A,B)=\max\{\eta(A,B),\eta(B,A)\}$ is complete since $\eta$ has the convergent-symmetry property.  
We note that $\eta_{max}$ is the restriction of the metric on $(\R_+^*)^3$ given by
$$d((A_1,A_2,A_3),(A'_1,A'_2,A'_3))=\max_{1\leq i \leq 3}\{\lvert \log A'_{i}-\log A_{i}\rvert \}.$$
Now consider the metric $d_1$ given on $\R^3$ by the following formula:
$$d_1((B_1,B_2,B_3),(B'_1,B'_2,B'_3))=\max_{1\leq i \leq 3}\lvert B'_{i}-B_{i}\rvert.$$
Then it is easy to see that 
$$E: (B_1,B_2,B_3)\to (e^{B_1},e^{B_2},e^{B_3})$$
is an isometry between $(\R^3,d_1)$ and $((\R_+^*)^3,d)$. Note that the former metric space is complete and   bi-Lipschitz equivalent to the usual Euclidean metric. Now since $E^{-1}(\mathcal{T}_1)$ is a closed subset of $\R^3$, the result follows.
\end{proof}

\begin{remark}
	For each $t\in (0,1)$, the space $(\frak{T}_1,\eta^a_t)$ has the convergence-symmetry property.
\end{remark}

\begin{theorem}
For each $t\in (0,1)$, the space	$\mathcal{E}(S,\mathcal{T})_1$ is complete with respect to the metric $\eta^m_t$, where 
	$$\eta_t^m(A,B)=\max\{(1-t)\eta(A,B), t\eta(B,A) \}.$$ Thus it follows that $\eta^a_t$ is complete as well.
	\end{theorem}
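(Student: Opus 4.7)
The plan is to adapt the strategy used for Theorem \ref{thm:complete} to the present more general setting. First, I observe that $\eta(\mathcal{T})^m_t$ and the unweighted max symmetrisation
$$\eta(\mathcal{T})_{\max}(A,B)=\max\{\eta(\mathcal{T})(A,B),\eta(\mathcal{T})(B,A)\}$$
are bi-Lipschitz equivalent (the factors $(1-t)$ and $t$ are bounded above and below away from $0$ for $t\in(0,1)$), so it suffices to show that $\mathcal{E}(S,\mathcal{T})_1$ is complete with respect to $\eta(\mathcal{T})_{\max}$.

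Next, I would observe that $\eta(\mathcal{T})_{\max}$ is the restriction to $\mathcal{E}(S,\mathcal{T})_1$ of the metric on the ambient space $(\R_+^*)^{\mathcal{J}}$ given by
$$d\bigl((A_{ijk}),(A'_{ijk})\bigr)=\max_{(i,j,k)\in\mathcal{J}}|\log A'_{ijk}-\log A_{ijk}|.$$
The map $E:\R^{\mathcal{J}}\to(\R_+^*)^{\mathcal{J}}$ defined coordinatewise by $B_{ijk}\mapsto e^{B_{ijk}}$ is an isometry from $(\R^{\mathcal{J}},d_1)$ to $((\R_+^*)^{\mathcal{J}},d)$, where $d_1$ is the sup-norm metric on $\R^{\mathcal{J}}$. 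Since $(\R^{\mathcal{J}},d_1)$ is bi-Lipschitz equivalent to the usual Euclidean metric and hence complete, $((\R_+^*)^{\mathcal{J}},d)$ is complete as well.

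I then claim that $E^{-1}(\mathcal{E}(S,\mathcal{T})_1)$ is a closed subset of $\R^{\mathcal{J}}$, which would finish the proof since a closed subset of a complete metric space is complete. Equivalently, I need $\mathcal{E}(S,\mathcal{T})_1$ to be closed in $(\R_+^*)^{\mathcal{J}}$. But $\mathcal{E}(S,\mathcal{T})$ was identified in Section \ref{s:surface} as an intersection of hyperplanes in $(\R_+^*)^{\mathcal{J}}$ coming from the interior-edge compatibility conditions, hence is closed in $(\R_+^*)^{\mathcal{J}}$. Within $\mathcal{E}(S,\mathcal{T})$, the subspace $\mathcal{E}(S,\mathcal{T})_1$ is the preimage of $\{1\}$ under the continuous area function $q:A\mapsto\mathrm{Ar}(A)$, so it is closed in $\mathcal{E}(S,\mathcal{T})$ and therefore closed in $(\R_+^*)^{\mathcal{J}}$.

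Finally, the completeness of $\eta(\mathcal{T})^a_t$ follows from the completeness of $\eta(\mathcal{T})^m_t$ by the elementary inequalities
$$\tfrac{1}{2}\,\eta(\mathcal{T})^m_t(A,B)\le \eta(\mathcal{T})^a_t(A,B)\le 2\,\eta(\mathcal{T})^m_t(A,B),$$
so that the two metrics are bi-Lipschitz equivalent and have the same Cauchy sequences and the same convergent sequences. I expect no substantive obstacle; the only point requiring a moment's care is checking that $\mathcal{E}(S,\mathcal{T})_1$ is genuinely closed in $(\R_+^*)^{\mathcal{J}}$, which reduces to continuity of Heron-type area formula together with the already-observed fact that the compatibility conditions cut out an intersection of hyperplanes.
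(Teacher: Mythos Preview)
Your proposal is correct and follows essentially the same route as the paper: reduce to completeness of the symmetric metric $\eta(\mathcal{T})_{\max}$, identify it as the restriction of the sup-log metric on $(\R_+^*)^{\mathcal{J}}$, pull back via the coordinatewise exponential to the sup-norm on $\R^{\mathcal{J}}$, and conclude by closedness of $\mathcal{E}(S,\mathcal{T})_1$. The only difference is cosmetic: the paper invokes Proposition~\ref{prop-crucial} (which needs the convergence-symmetry property of $\eta(\mathcal{T})^m_t$) to pass to the max-symmetrisation $\eta(\mathcal{T})^m_{t,\max}=t\,\eta(\mathcal{T})_{\max}$, whereas you go directly via the bi-Lipschitz equivalence $\min(t,1-t)\,\eta(\mathcal{T})_{\max}\le \eta(\mathcal{T})^m_t\le \eta(\mathcal{T})_{\max}$; your route is slightly more self-contained and also yields the convergence-symmetry property for free. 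You also supply the argument for $\eta(\mathcal{T})^a_t$ and the justification of closedness, both of which the paper leaves implicit.
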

	\begin{proof}
		The proof is  similar to that of Theorem \ref{thm:complete}. We can assume without loss of generality that $t\geq 1-t$. By Proposition \ref{prop-crucial}, it suffices to show that the metric $$\eta^m_{t,max}(A,B)=\max\{\eta_t^m(A,B),\eta_t^m(B,A)\}=t\max\{ \eta(A,B),\eta(B,A))\}$$
		  is complete,  since $\eta_t^m$ has the convergent-symmetry property.
		Observe  that $\eta^m_{t,max}$ is the restriction of the metric on $(\R_+^*)^{\mathcal{J}}$ given by
		$$d((A_{ijk}),(A'_{ijk}))=t\max_{ijk}\{\lvert \log A'_{ijk}-\log A_{ijk}\rvert \}.$$
		Now consider the metric $d_1$ on $\R^{\mathcal{J}}$ given by the following formula:
		$$d_1((B'_{ijk}),(B_{ijk}))=t\max_{ijk}\lvert B'_{ijk}-B_{ijk}\rvert.$$
		Then it is not difficult to see that 
		$$E: (B_{ijk})\to (e^{B_{ijk}})$$
		is an isometry between $(\R^{\mathcal{J}},d_1)$ and $((\R_+^*)^{\mathcal{J}},d)$. Note that the former metric space is complete and   bi-Lipschitz equivalent to the usual Euclidean metric. Now since $E^{-1}(\mathcal{E}(S,\mathcal{T})_1)$ is a closed subset of $\R^{\mathcal{J}}$, the result follows.
	\end{proof}

\begin{example}
\label{incomplete}
 {\rm In this example, the surface is a disc and 
	  $\mathcal{T}$ is a triangulation of the disc by two  triangles.
	We show that $\mathcal{E}(S,\mathcal{T})_1$ does not have the convergence-symmetry property and that it is forward incomplete with respect to the metric $\eta(\mathcal{T})$. 
	
Set $c(n)=\sqrt{\sqrt{3}+1/n}$, $d(n)=\sqrt{\sqrt{3}+2/n}$, $a(n)=\sqrt{1+1/n^2}$ and $b(n)=\sqrt{1+4/n^2}$. Let $Q_n$ be the quadrangle with edge lengths 
$$\frac{a(n)}{c(n)},\frac{a(n)}{c(n)},\frac{2}{c(n)},\frac{2}{c(n)}$$
and with a diagonal of length $\frac{2}{c(n)}$ dividing $Q_n$ into two isosceles triangles. Note that the area of $Q_n$ is equal to 1. Similarly let $Q'_n$ be the quadrangle with edge lengths 
$$\frac{b(n)}{d(n)},\frac{b(n)}{d(n)},\frac{2}{d(n)},\frac{2}{d(n)}$$
and with a diagonal of length $\frac{2}{d(n)}$ dividing $Q'_n$ into two isosceles triangles. Note that the area of $Q_n$ is also equal to 1. Therefore, $Q_n$ and $Q'_n$ lie in $\mathcal{E}(S,\mathcal{T})_1$. 
Now it is not difficult to see that 
$$\eta(Q_n,Q'_n)\to \log 4 \ \text{and}\  \eta(Q'_n,Q_n)\to 0 \ \text{as}\ n\to \infty,$$
which shows that $\mathcal{E}(S,\mathcal{T})_1$ does not have the convergence-symmetry property.	

Now we can also see that the sequence $(Q_n)$ is forward Cauchy, whereas in the limit the triangle with sides $a(n)/c(n), a(n)/c(n), 2/c(n)$ degenerates to a segment.
This shows that the space $\mathcal{E}(S,\mathcal{T})_1$ is not forward complete.}
\end{example}

\bigskip

\noindent
\.{I}smail Sa\u{g}lam, Adana Alparslan Turkes Science and Technology University,\\ Department of Aerospace Engineering, 
Adana,T\"{u}rkiye, 
\\
e-mail: isaglamtrfr@gmail.com\\

\noindent Ken'ichi Ohshika,
Department of Mathematics,
Gakushuin University,
Mejiro, Toshima-ku, 171-8588 Tokyo, Japan,
\\
and
\\
Max-Planck-Institut für Mathematik,
Vivatsgasse 7, 53111 Bonn, Germany
\\
 e-mail: 
  \email{ohshika@math.gakushuin.ac.jp}\\

\noindent Athanase Papadopoulos, 
 Institut de Recherche Math\'ematique Avanc\'ee, 
 CNRS et Universit\'e de Strasbourg, 
7 rue Ren\'e Descartes, 67084, Strasbourg, France 
\\
and
\\
Max-Planck-Institut für Mathematik,
Vivatsgasse 7, 53111 Bonn, Germany
\\
 e-mail: 
   \email{papadop@math.unistra.fr}


\end{document}